\DeclareSymbolFont{bbold}{U}{bbold}{m}{n}
\DeclareSymbolFontAlphabet{\mathbbold}{bbold}
\DeclareSymbolFontAlphabet{\mathbbm}{bbold}
\DeclareSymbolFontAlphabet{\mathbb}{AMSb}%
\begin{document}


\newgeometry{left=3.5cm, right=3.5cm, top=2cm, bottom=3cm}

\title{Triangularisation of Singularly Perturbed Logarithmic Differential Systems of Rank 2}

\author{Nikita Nikolaev}

\affil{\small School of Mathematics and Statistics, University of Sheffield, United Kingdom
\\
Section of Mathematics, University of Geneva, Switzerland}

\date{17 December 2021}

\maketitle
\thispagestyle{frontpage}

\begin{abstract}
\noindent
We study singularly perturbed linear systems of rank two of ordinary differential equations of the form $\hbar x\del_x \psi (x, \hbar) + \AA (x, \hbar) \psi (x, \hbar) = 0$, with a regular singularity at $x = 0$, and with a fixed asymptotic regularity in the perturbation parameter $\hbar$ of Gevrey type in a fixed sector.
We show that such systems can be put into an upper-triangular form by means of holomorphic gauge transformations which are also Gevrey in the perturbation parameter $\hbar$ in the same sector.
We use this result to construct a family in $\hbar$ of Levelt filtrations which specialise to the usual Levelt filtration for every fixed nonzero value of $\hbar$; this family of filtrations recovers in the $\hbar \to 0$ limit the eigen-decomposition for the $\hbar$-leading-order of the matrix $\AA (x, \hbar)$, and also recovers in the $x \to 0$ limit the eigen-decomposition of the residue matrix $\AA (0, \hbar)$.
\end{abstract}

{\small
\textbf{Keywords:}
singular perturbation theory,
exact perturbation theory,
Borel summation,
singular differential systems,
ordinary differential equations,
Levelt filtration,
asymptotic analysis,
Gevrey asymptotics,
resurgence
\\
\textbf{2020 MSC:} 
	\MSCSubjectCode{34M60} (primary); 
	\MSCSubjectCode{34E10},
	\MSCSubjectCode{34E20},
	\MSCSubjectCode{40G10},
}


{
\small
\setcounter{tocdepth}{3}
\tableofcontents
}

\newpage
\restoregeometry

\section{Introduction}

We revisit the classical problem in singular perturbation theory of studying systems of linear ordinary differential equations of the form
\eqntag{\label{190327201953}
	\hbar x\del_x \psi + \AA (x, \hbar) \psi = 0
\fullstop{,}
}
where $x$ is a complex independent variable, $\hbar$ is a complex parameter, $\psi = \psi (x, \hbar)$ is a vector function, and $\AA (x, \hbar)$ is a matrix of functions which are holomorphic near $x = 0$ and admit a uniform asymptotic expansion as $\hbar \to 0$ in a sector in the $\hbar$-plane.
Such systems not only have a regular singular point at the origin $x = 0$, but are also singularly perturbed in $\hbar$.
The most important class of examples matrices $\AA (x, \hbar)$ which are polynomial in $\hbar$.

If the perturbation parameter $\hbar$ were held constant and nonzero, then this system specialises to a usual linear system of ordinary differential equations with a regular singularity.
Then standard theory (see e.g., \cite{MR0460820}) tells us that the (finite-dimensional) vector space of solutions $V_\hbar$ is naturally filtered as 
\eqn{
	V^\bullet_\hbar
		\coleq \big( 0 \subset V^1_\hbar \subset V^2_\hbar \subset \cdots \subset V^n_\hbar = V \big)
\fullstop
}
by increasing growth rate as $x \to 0$.
Namely, the steps in this filtration are weighted by numbers $\nu_i$ (the growth rate), and the subspace $V^i_\hbar$ consists of solutions $\psi_\hbar (x)$ which grow like $x^{- \nu_i}$ as $x \to 0$.
To be precise, $V^i_\hbar$ is defined to be subspace of $V_\hbar$ consisting of solutions $\psi_\hbar (x)$ which satisfy the following growth bound: for all $\delta > 0$,
\eqn{
	\lim_{x \to 0} x^{\nu_i + \delta} \psi_\hbar (x) = 0
\fullstop
}
Such filtrations are often called \dfn{Levelt filtrations} \cite{MR0145108,zoladek2006monodromy,MR2785493}.

The main problem we wish to address in this paper is the construction Levelt filtrations $V_\hbar$ \textit{as an asymptotic family}.
That is, we wish to construct a filtration $V^\bullet$ on the space of solutions of the singularly perturbed system \eqref{190327201953} such that its specialisation to any fixed nonzero $\hbar$ is the Levelt filtration $V_\hbar$ above.
The main challenge is to construct $V^\bullet$ in such a way that we maintain a very tight asymptotic control as $\hbar \to 0$.
The reason this is interesting is that the system \eqref{190327201953} in the singular perturbation limit $\hbar \to 0$ degenerates (as is very typical in singular perturbation theory) from a differential system to a problem in linear algebra: $\AA_0 (x) \psi = 0$, where $\AA_0$ is the limit of $\AA$ as $\hbar \to 0$.
The tight asymptotic control on $V^\bullet$ that we are able to achieve allows us to make a direct identification of the asymptotic limit of Levelt filtrations.

\paragraph{Main results.}
Let us briefly outline the main results in this paper.
We focus on the simplest case where $\AA$ is a $2 \times 2$-matrix (i.e., the \textit{rank} of the system is $2$) of functions which are defined and holomorphic on a domain of the form $D \times S$ where $D$ is a disc centred at the origin in the $x$-plane and $S$ is a sector in the based at the origin in the $\hbar$-plane with opening angle at least $\pi$.
We also assume that the constant matrix $\AA_{00}$, obtained from $\AA$ in the limit as $x \to 0$ and $\hbar \to 0$, has eigenvalues $m_1, m_2$ whose real parts satisfy $\Re (m_1 / \hbar) < \Re (m_2 / \hbar)$ for all $\hbar \in S$.
We make a further crucial assumption on the asymptotic regularity of $\AA$ as $\hbar \to 0$: we insist that $\AA$ has an asymptotic expansion of class Gevrey.
This assumption allows us to capture strict control of the asymptotics in the sense that our main constructions remain within the same regularity class.
For this, we use the powerful machinery of Borel resummation.

Singularly perturbed linear systems and linear ordinary differential equations have been recently studied using Borel resummation techniques (e.g., \cite{MR1914445,MR1919789,MR1986073,MR2182990,MR2292516,MR3156848}), but to the best of our knowledge the question of constructing the Levelt filtration in the strictly-controlled sense above has not been addressed.

The first main result in this paper is the existence of the Levelt filtration in singular perturbation families.

\begin{thmx}[Levelt filtration for singularly perturbed systems ({\protect\Autoref{190325151948}})]
Given such a singularly perturbed system \eqref{190327201953}, the $2$-dimensional vector space $V$ of solutions of $\AA$ has a natural $1$-dimensional subspace $L \subset V$ such that for any nonzero $\hbar \in S$, the filtration $L_\hbar \subset V_\hbar$ is the Levelt filtration for the system $\AA_\hbar$. 
\end{thmx}

The Levelt filtration on $V$ induces a natural filtration on the vector space $\Complex^2$ on which the differential system $\AA$ is defined in the first place.
This is the filtration whose asymptotics can be identified with linear-algebraic data as follows (this is part of \protect\Autoref{190909102739}).

\begin{thmx}
Given such a singularly perturbed system \eqref{190327201953}, there is a natural $1$-dimensional subspace $L = L(x, \hbar) \subset \Complex^2$, which depends on $(x, \hbar)$, defined over a subdomain $D_0 \times S_0 \subset D \times S$ (where $D_0 \subset D$ is a concentric subdisc and $S_0 \subset S$ is a subsector which (crucially) has the same opening angle), and such that $L$ and the quotient space $L' = L' (x, \hbar) \coleq \Complex^2 / L$ have the following properties:
\begin{enumerate}
\item The limit vector space $\lim\limits_{x \to 0} L \oplus L'$ is canonically isomorphic to the eigenspace decomposition of the residue matrix $\AA (0, \hbar)$.
\item The limit vector space $\lim\limits_{\hbar \to 0} L \oplus L'$ is canonically isomorphic to the eigenspace decomposition of the leading order matrix $\AA_0 (x)$.
\end{enumerate}
\end{thmx}

The main technical tool in proving these theorems is the ability to gauge transform any system $\AA$ as above to an upper-triangular form in a way to maintains the Gevrey asymptotics.
Namely, we prove the following theorem.

\enlargethispage{10pt}
\begin{thm}[Triangularisation Theorem (\protect\Autoref{190325143646})]{190909184524}
Given a singularly perturbed system \eqref{190327201953} as above, there is an invertible $2\times 2$-matrix $\GG = \GG (x, \hbar)$ whose entries are holomorphic functions on a subdomain $D_0 \times S_0 \subset D \times S$ (where $D_0 \subset D$ is a concentric subdisc and $S_0 \subset S$ is a subsector which (again crucially) has the same opening angle), with uniform Gevrey asymptotic regularity in $S_0$, which transforms the given system into an upper-triangular system of the form
\eqntag{\label{190909185308}
	\hbar x\del_x \phi + \left( \mtx{ \lambda_1 (x, \hbar) & \HIDE{0} \\ \HIDE{0} & \lambda_2 (x, \hbar)} + \mtx{ \HIDE{0} & u (x, \hbar) \\ \HIDE{0} & \HIDE{0} } \right) \phi = 0
\fullstop{,}
}
where $\lambda_i (x, \hbar)$ is of the form $m_i + x \mu_i (x) + \hbar \kappa_i (\hbar)$ for some holomorphic function $\mu_i (x)$ on $D_0$ and some Gevrey function $\kappa_i (\hbar)$ on $S_0$, and where $u (x, \hbar)$ is a holomorphic function on $D_0 \times S_0$ which is uniformly Gevrey on $S_0$ and which vanishes in the limit $x \to 0$ and in the limit $\hbar \to 0$.
\end{thm}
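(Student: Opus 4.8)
The plan is to realise $\GG$ as a finite composition of elementary gauge transformations, isolating the one genuinely analytic step — solving a triangularising Riccati equation with Gevrey control — at the very end. First, since the eigenvalue hypothesis forces $m_1 \neq m_2$, the constant matrix $\AA_{00}$ is diagonalisable, so after a constant gauge transformation I may assume $\AA_{00} = \mathrm{diag}(m_1, m_2)$. Next I separately diagonalise in $x$ and in $\epsilon$. The leading-order matrix $\AA_0 (x) = \lim_{\epsilon \to 0} \AA (x, \epsilon)$ satisfies $\AA_0 (0) = \AA_{00}$ and hence has simple eigenvalues on a concentric subdisc, where it is diagonalised by a holomorphic gauge $P (x)$ with $P (0) = \bbid$; since $P$ is $\epsilon$-independent, its $\epsilon x \de_x$-contribution vanishes at $\epsilon = 0$, so after this gauge the off-diagonal entries of $\AA$ vanish identically along $\epsilon = 0$. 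Similarly the residue matrix $\AA (0, \epsilon)$ has simple eigenvalues for $\epsilon$ near $0$ and is diagonalised by a Gevrey gauge $Q (\epsilon)$ with $Q (0) = \bbid$ on a subsector of the same opening; being $x$-independent it adds nothing to the matrix, and $Q (0) = \bbid$ keeps the vanishing along $\epsilon = 0$ intact. After these three reductions I may assume $\AA_{00} = \mathrm{diag}(m_1, m_2)$ and that both off-diagonal entries of $\AA = \mtx{a & b \\ c & d}$ vanish along $x = 0$ and along $\epsilon = 0$.

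Now conjugating by $\bbid + \mtx{0 & 0 \\ g & 0}$ (whose inverse is $\bbid - \mtx{0 & 0 \\ g & 0}$) produces an upper-triangular system exactly when $g$ solves the scalar Riccati equation
\[
	\epsilon x \de_x g = \big( a (x, \epsilon) - d (x, \epsilon) \big) g + b (x, \epsilon) g^2 - c (x, \epsilon) ,
\]
in which case the new diagonal entries are $\lambda_1 = a + bg$ and $\lambda_2 = d - bg$ and the new upper-right entry is $u = b$. So the theorem reduces to producing a solution $g = g (x, \epsilon)$ of this equation that is holomorphic on a concentric subdisc $D_0$, uniformly Gevrey on a subsector $S_0$ of the same opening as $S$, and that vanishes in the limits $x \to 0$ and $\epsilon \to 0$: granting this, $bg$ vanishes along $x = 0$ and along $\epsilon = 0$, hence so does $u = b$, while $\lambda_i (0, 0) = m_i$.

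Constructing $g$ is the heart of the proof. Because $a - d \to m_1 - m_2 \neq 0$ at $(x, \epsilon) = (0, 0)$ and $c$ vanishes along $x = 0$ and $\epsilon = 0$, comparing $\epsilon$-asymptotic expansions of both sides yields a unique formal solution $\hat g = \sum_{k \geq 0} \epsilon^k g_k (x)$ with $g_0 \equiv 0$, each $g_k$ holomorphic on a fixed subdisc and obtained by inverting the invertible leading coefficient $a (x, 0) - d (x, 0)$; an induction combining the Cauchy estimate for the $x \de_x$-term with the Gevrey-$1$ bounds on the coefficients of $\AA$ shows $\hat g$ is Gevrey-$1$ in $\epsilon$, uniformly for $x \in D_0$, and that $g_k (0) = 0$ for all $k$. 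I then pass to the Borel plane: the Borel transform in $\epsilon$ turns the Riccati equation into a nonlinear convolution equation for $G (x, \xi)$, to be solved by a fixed-point argument in a Banach space of functions holomorphic in $(x, \xi)$ on a product of a subdisc with a sectorial neighbourhood of the relevant Borel directions and of at most exponential growth in $\xi$. The precise hypothesis is decisive here: the ordering condition places $S$ inside the half-plane $\Re \big( (m_2 - m_1) / \epsilon \big) > 0$, so the singular rays $\arg \epsilon = \arg (m_1 - m_2)$ — along which the indicial-type quantity $n \epsilon - \big( a (0, \epsilon) - d (0, \epsilon) \big)$ can vanish — lie outside $S$; hence no small-divisor obstruction arises, and Laplace transforming along the admissible directions returns an honest solution $g (x, \epsilon)$ holomorphic on $D_0 \times S_0$ with $S_0$ of the same opening as $S$ and Gevrey-asymptotic to $\hat g$. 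Vanishing as $\epsilon \to 0$ is immediate from $g_0 \equiv 0$, and vanishing as $x \to 0$ follows by evaluating the Riccati equation at $x = 0$, where it reduces to $\big( a (0, \epsilon) - d (0, \epsilon) \big) g (0, \epsilon) = 0$ using $b (0, \epsilon) = c (0, \epsilon) = 0$, with $a (0, \epsilon) - d (0, \epsilon)$ invertible near $\epsilon = 0$.

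Finally I normalise the diagonal. Since $\lambda_i (0, 0) = m_i$ and $\lambda_i$ is holomorphic in $x$ and Gevrey in $\epsilon$, write $\lambda_i (x, \epsilon) = m_i + x \mu_i (x) + \epsilon \kappa_i (\epsilon) + r_i (x, \epsilon)$ with $\mu_i (x) = \big( \lambda_i (x, 0) - m_i \big) / x$ and $\kappa_i (\epsilon) = \big( \lambda_i (0, \epsilon) - m_i \big) / \epsilon$; the remainder $r_i$ vanishes along $x = 0$ and along $\epsilon = 0$, hence factors as $r_i = x \epsilon\, s_i (x, \epsilon)$, and a diagonal gauge $\mathrm{diag} (p_1, p_2)$ with $\de_x \log p_i = - s_i$ (so $\log p_i (x, \epsilon) = - \int_0^x s_i (t, \epsilon)\, dt$, holomorphic in $x$, Gevrey in $\epsilon$, with $p_i (0, \epsilon) = 1$) deletes $r_i$ while only multiplying $u$ by the nowhere-vanishing factor $p_2 / p_1$, preserving all the vanishing already obtained. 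The sought $\GG$ is the product of these five gauges; it is invertible and holomorphic on $D_0 \times S_0$ and uniformly Gevrey on $S_0$. The main obstacle is the Borel-plane fixed-point step: identifying the correct weighted space of holomorphic functions, proving the contraction estimate for the nonlinear convolution operator uniformly in the parameter $x$, and verifying that the output subsector $S_0$ can be taken with the full opening of $S$ — this is precisely where the Gevrey hypothesis and the sharp form of the eigenvalue condition are both used.
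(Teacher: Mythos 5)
Your proposal is correct and follows essentially the same route as the paper: the preliminary constant/leading-order/residue diagonalisations match the paper's Lemma on diagonalisation of spectral data, your lower-unipotent Riccati gauge coincides (up to the sign convention $s = -g$) with the paper's $\GG_1$ and its equation \eqref{190320200238}, the final diagonal gauge is the paper's $\GG_2$, and the heart of the argument — solving the Riccati equation by formal construction plus Borel--Laplace with the ordering hypothesis ruling out singular directions — is exactly what the paper's Main Asymptotic Existence Lemma delivers. You correctly flag the Borel-plane step as the main obstacle; the one technical ingredient you do not mention is the paper's coordinate change $z = e^{-i\theta}\rho\log\tilde{x}$ that makes the principal part of the Borel-transformed PDE constant-coefficient before running the successive-approximations argument, which is what makes the convolution estimates tractable on a semi-infinite strip rather than a disc.
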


The proof of this theorem is inspired by the argument of Koike-Schäfke on the Borel summability of WKB solutions of Schrödinger equations (see \cite[\S3.1]{takei2017wkb} for an account.).
Finally, we remark that these results may be viewed as a case of generalisation to second-order systems of the exact WKB method that is normally applied to second-order scalar equations \cite{MY210623112236}.
This point will be developed in great detail in a series of forthcoming publications.

\paragraph{Acknowledgements.}
The author wishes to thank Marco Gualtieri, Kohei Iwaki, and Shinji Sasaki for very helpful discussions.
This work was supported by the NCCR SwissMAP of the SNSF.
Several important aspects of this work were initiated during the author's time as a PhD student at the University of Toronto, and he wishes to thank the Department of Mathematics for the hospitality as well as a very productive and supportive environment.

\subsection{Definitions and Conventions}

In this paper, we fix, once and for all, complex coordinates $x$ and $\hbar$, as well as $\Theta$ to be either a connected arc $(\theta_-, \theta_+)$ on the unit circle or a single point $\theta$.
Let
\eqn{
	\hat{\Theta} \coleq (\theta_- - \pi/2, \theta_+ + \pi/2)
\qqtext{or}
	\hat{\Theta} \coleq (\theta - \pi/2, \theta + \pi/2)
\fullstop
}
We we refer to points in $\hat{\Theta}$ as directions.
The most typical domain of definition of our objects will be a disc in the $x$-plane and a sector in the $\hbar$-plane with opening $\hat{\Theta}$: by a \dfn{standard domain} we mean any domain $D \times S$ of the form
\eqntag{\label{190321000908}
\begin{aligned}
	D &\coleq \set{ x \in \Complex ~\big|~ |x| < r_1} \subset \Complex_x,
\\
	S &\coleq \set{ \hbar \in \Complex 
		~\big|~ 0 < |\hbar| < r_2
		\qtext{and}
			\arg (\hbar) \in \hat{\Theta}
			} \subset \Complex_\hbar
\fullstop{,}
\end{aligned}
}
for some real numbers $r_1, r_2> 0$.
The arc $\hat{\Theta}$ is called the opening of $S$.
Limits as $\hbar \to 0$ will always be taken inside the given sector $S$, so we adopt the following shorthand notation:
\eqn{
	\lim_{\hbar \to 0}
		\coleq \lim_{\substack{\hbar \to 0 \\ \hbar \in S}}
		= \lim_{\substack{\hbar \to 0 \\ \arg (\hbar) \in \hat{\Theta}}}
\fullstop
}

Recall that a holomorphic function $f = f (\hbar)$ on $S$ is \dfn{Gevrey} if, for every proper subsector $S' \subset S$ whose closure lies in $S$, there are constants $\CC, \MM > 0$ such that
\eqntag{\label{190909170232}
	\sup_{S'} \left| \frac{\del_\hbar^k f }{k!} \right| < \CC \MM^k k!
}
for all $k \in \Integer_{\geq 0}$.
We will say that $f$ is \dfn{strongly Gevrey} if the above bound holds for $S' = S$.
If the opening of $S$ has length exactly $\pi$ (so that $\Theta$ is a single direction), we will always assume that $f$ is strongly Gevrey.
Such functions form a ring (in fact a differential algebra) which we will denote by $R (S)$.

The most typical kind of functions that we will encounter in this paper is those that are holomorphic in $x$ and Gevrey in $\hbar$.
To be precise, we will say that a holomorphic function $f = f(x, \hbar)$ on a standard domain $D \times S$ is a \dfn{regular function} if $f$ is Gevrey on $S$ uniformly for all $x \in D$.
Such functions form a ring (again, in fact a differential algebra), which we will denote by $R (D \times S)$.

Any regular function $f = f(x, \hbar)$ on $D \times S$ admits an asymptotic expansion which we will always write as follows:
\eqntag{\label{190909171052}
	f (x, \hbar)
		\sim \hat{f} (x, \hbar)
		\coleq \sum_{k=0}^\infty f_k (x) \hbar^k
\fullstop{,}
}
where each $f_k (x)$ is a holomorphic function on $D$.
The fact that $f$ is uniformly Gevrey in $\hbar$ means that the coefficients of the formal power series in $\hbar$ in \eqref{190909171052} satisfy the following uniform bounds:
there are constants $\CC, \MM > 0$ such that for all $k \geq 0$,
\eqn{
	|f_k (x)| \leq \CC \MM^k k!
}
uniformly for all $x \in D$.
We will always refer to such series $\hat{f}$ as \dfn{regular $\hbar$-series} defined on $D$.
They also form a differential algebra which we denote by $\hat{R} (D)$.

This paper is concerned only with the local analysis of singularly perturbed systems near the singularities $x = 0$ and $\hbar = 0$.
Therefore, we may as well concentrate our attention on germs of  functions.
Recall that a germ of a Gevrey function on the arc $\hat{\Theta}$ is represented by any Gevrey function on a sector $S$.
We will sometimes refer to a germ of a regular function as a \dfn{regular germ}.
Regular germs also form a differential algebra which we will denote by $R$.
If the coefficients $f_k (x)$ of the Gevrey power series $\hat{f}$ in \eqref{190909171052} are germs of holomorphic functions at $x = 0$, we will refer to $\hat{f}$ as simply simply a \dfn{regular $\hbar$-series} (without specifying a disc in $\Complex_x$ where its coefficients are defined).
We denote the ring of germs of regular $\hbar$-series by $\hat{R}$.

\section{Singularly Perturbed Systems}

In this paper, we study linear systems of differential equations like \eqref{190327201953}, and we focus on a special class in the following sense.

\begin{defn}{190907134000}
By a \textbf{singularly perturbed differential system} $\AA$ (or simply a \textbf{system} from now on) we shall mean a system of linear ordinary differential equations for a $2$-dimensional vector function $\psi = \psi (x, \hbar)$ of the form
\eqntag{\label{190320233436}
	\llap{$\AA \qquad:\qquad$} \hbar x \del_x \psi + \AA (x, \hbar) \psi = 0
\fullstop{,}
}
where $\AA (x,\hbar)$ is a $2\times 2$-matrix of germs of regular functions; i.e., $\AA \in \frak{gl} (2, R)$.
\end{defn}

Concretely, the entries of $\AA$ are regular functions defined on a standard domain $D \times S$.
The most important subclass of systems is when the matrix $\AA (x, \hbar)$ is in fact holomorphic at $\hbar = 0$, or even altogether independent of $\hbar$.
The most prominent example of this is the local study of the stationary one-dimensional Schrödinger equation 
\eqn{
	\hbar^2 \del_x^2 \Psi + \VV (x) \Psi = 0
}
near a second order pole of the potential $\VV (x)$.
If we write $\VV (x) = x^{-2} \QQ (x)$, then this equation is equivalent to the system
\eqn{
	\hbar x \del_x \psi + \mtx{ \HIDE{0} & -1 \\ \QQ(x) & \HIDE{0}} \psi = 0
	\fullstop
}

Note also that any singularly perturbed system $\AA$ can be specialised at every fixed nonzero $\hbar \in S$ to a usual linear system of ordinary differential equations defined over the disc $D$, which we shall denote by $\AA_\hbar$:
\eqntag{\label{190908154600}
	\llap{$\AA_\hbar \qquad:\qquad$} x \del_x \psi + \hbar^{-1} \AA (x, \hbar) \psi = 0
\fullstop
}

\paragraph{Regular and formal equivalence.}
In this paper, there are two main notions of equivalence of systems.
Invertible $2 \times 2$-matrices $\GG = \GG (x, \hbar)$ act on systems by gauge transformations:
\eqn{
	\GG \bullet \AA \coleq \GG \AA \GG^{-1} - \left( \del_x \GG \right) \GG^{-1}
\fullstop
}
We will say that $\GG$ is \dfn{regular gauge transformation} if its entries are germs of regular functions.
Regular gauge transformations form a group $G \coleq \GL (2, R)$.
If, instead, the entries of $\GG$ are regular $\hbar$-series, we will call $\GG$ a \dfn{formal gauge transformation}.
They also form a group $\hat{G} \coleq \GL (2, \hat{R})$.

Two systems $\AA, \AA'$ are \dfn{regularly gauge equivalent} (and we write $\AA \sim \AA'$) if there is a regular gauge transformation $\GG$ such that $\GG \bullet \AA = \AA'$.
We will say that $\AA, \AA'$ are \dfn{formally gauge equivalent} (in which case we will write $\AA \:\hat{\sim}\: \AA'$) if there is a formal gauge transformation $\hat{\GG}$ such that $\hat{\GG} \bullet \AA = \AA'$.

Concretely, if two systems of equations
\eqn{
	\hbar x \del_x \psi + \AA (x, \hbar) \psi = 0
	\qqtext{and}
	\hbar x \del_x \psi' + \AA' (x, \hbar) \psi' = 0
\fullstop{,}
}
both defined over a domain $D \times S$ of the form \eqref{190321000908}, then $\AA \sim \AA'$ if there is a subdisc $D_0 \subset D$, a subsector $S_0 \subset S$ with the same opening, and an invertible matrix $\GG = \GG (x, \hbar)$ of regular functions defined on $D_0 \times S_0$ such that the transformation $\psi = \GG \psi'$ carries the system $\AA$ into $\AA'$.
Likewise, $\AA \:\hat{\sim}\: \AA'$ if there is a subdisc $D_0 \subset D$ and an invertible matrix $\hat{\GG} = \hat{\GG} (x, \hbar)$ of regular $\hbar$-series defined on $D_0$ such that the transformation $\psi = \hat{\GG} \psi'$ carries the system $\AA$ into $\AA'$.

\paragraph{Set of systems.}
Let $\sfop{Syst}$ denote the set of all systems, and let $\sfop{Syst} (\AA)$ be the set of all systems formally equivalent to $\AA$.
Obviously, if $\AA \:\hat{\sim}\: \AA'$, then $\sfop{Syst} (\AA) = \sfop{Syst} (\AA')$.
We will denote the \dfn{regular equivalence class} of $\AA$ by $[\AA]$.
One result in this paper is to show that generically a formal equivalence class $\sfop{Syst} (\AA)$ contains a canonical diagonal system $\Lambda$ which has a very simple and standard form.

\subsection{Spectral Data and Formal Normal Forms}

\paragraph{Classical polar data.}
We will refer to the leading order part of $\AA$ in both $x$ and $\hbar$ (which is a constant matrix $\AA_{00}$) as the \dfn{classical residue} of the system $\AA$:
\eqn{
	\AA_{00}
		\coleq\lim_{\substack{\hbar \to 0 \\ x \to 0}} \AA (x, \hbar)
		\in \frak{gl} (2, \Complex)
\fullstop
}
The classical residue of a system plays the most central r\^ole in this paper.
Let $m_1, m_2 \in \Complex$ be the eigenvalues of $\AA_{00}$.
The pair $\set{m_1, m_2}$ is clearly an invariant of the system $\AA$,   which we will call \dfn{classical polar data}.
We will say that a classical polar data $\set{m_1, m_2}$ is \dfn{generic} if $m_1 \neq m_2$; we will say it is \dfn{nonresonant} with respect to the arc $\Theta$ if 
\eqn{
	\Re \left( e^{-i\theta} (m_1 - m_2) \right) \neq 0
	\rlap{\qqquad ($\forall \theta \in \Theta$) \fullstop}
}
If $\set{m_1, m_2}$ is nonresonant over the arc $\Theta$, we will always order these eigenvalues by the increasing real part:
\eqn{
	m_1 \prec m_2 \qquad \coliff \qquad \Re (e^{i\theta} m_1) < \Re (e^{i\theta} m_2)
	\rlap{\qqquad ($\forall \theta \in \Theta$) \fullstop}
}

\paragraph{Classical spectral data.}
The $\hbar$-leading order part of $\AA$ is matrix of convergent power series $\AA_0 (x) \in \frak{gl} \big(2, \Complex \set{x} \big)$, and $\AA_0 (0) = \AA_{00}$.
If the classical polar data $\set{m_1, m_2}$ is generic, then then standard theory (e.g., see \cite[\S25.2]{MR0460820}) implies that $\AA_0 (x)$ is diagonalisable: there are holomorphic germs $\eta_1, \eta_2 \in \Complex \set{x}$ such that $\eta_i (0) = m_i$, and an invertible matrix $\GG = \GG (x)$ of convergent power series such that 
\eqn{
	\GG \AA_0 \GG^{-1} = \mtx{ \eta_1 (x) & \HIDE{0} \\ \HIDE{0} & \eta_2 (x)}
\fullstop
}
We will refer to the set of eigenvalues $\set{\eta_1, \eta_2}$ as the \dfn{classical spectral data} of the system $\AA$.
If the classical polar data is ordered $m_1 \prec m_2$, then we order the classical spectral data accordingly: $\eta_1 \prec \eta_2 ~ \coliff ~ m_1 \prec m_2$.
It is easy to see that classical spectral data is also an invariant of the system $\AA$.

\enlargethispage{30pt}
\paragraph{Polar data.}
We define the \dfn{residue} of a system $\AA$ to be the matrix $\AA (0, \hbar)$ of Gevrey function germs on the arc $\hat{\Theta}$.
Its classical limit $\lim\limits_{\hbar \to 0} \AA (0, \hbar)$ is the classical residue $\AA_{00}$.
Concretely, the entries of $\AA (0, \hbar)$ are Gevrey functions on a sector $S$ of the form \eqref{190321000908}.
If the classical polar data $\set{m_1, m_2}$ is generic, then the matrix $\AA (0, \hbar)$ can likewise be diagonalised in a way that retains the asymptotic regularity, thanks to the following proposition, which is a special case of \cite[Theorem 1.1]{MY211216122156}.

\begin{prop}[diagonalisation in asymptotic families]{190315132922}
Let $\AA = \AA (\hbar)$ be a $2 \times 2$-matrix of Gevrey functions germs, and assume that its leading-order $\AA_0$ has distinct eigenvalues $m_1, m_2$.
Then there is an invertible Gevrey matrix $\GG = \GG (\hbar)$ such that $\RR \coleq \GG \AA \GG^{-1}$ is a diagonal Gevrey matrix.
\end{prop}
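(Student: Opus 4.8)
The plan is to diagonalise $\AA(\epsilon)$ one eigenvalue at a time, reducing the problem to solving a scalar Riccati-type equation with Gevrey coefficients. First I would observe that since $\AA_0 = \lim_{\epsilon\to 0}\AA(\epsilon)$ has distinct eigenvalues $m_1, m_2$, one can pick a constant invertible matrix $\GG_0$ diagonalising $\AA_0$, so after replacing $\AA$ by $\GG_0 \AA \GG_0^{-1}$ (still Gevrey) we may assume $\AA(\epsilon) = \mtx{a & b \\ c & d}$ where $a(\epsilon) \to m_1$, $d(\epsilon) \to m_2$, and $b(\epsilon), c(\epsilon) \to 0$ as $\epsilon \to 0$. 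The goal is then to kill the off-diagonal entries.

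Next I would split the diagonalisation into two triangularisation steps. To remove the lower-left entry $c$, conjugate by $\GG = \mtx{1 & 0 \\ p & 1}$ for an unknown Gevrey germ $p = p(\epsilon)$: a direct computation shows the new lower-left entry vanishes precisely when $p$ satisfies the algebraic (quadratic) equation $c + (d-a)p - b p^2 = 0$. Since $d - a \to m_2 - m_1 \neq 0$ and $b, c \to 0$, the implicit function theorem for Gevrey germs — or more elementarily, solving the quadratic by the usual formula and noting that the discriminant $(d-a)^2 + 4bc$ is a Gevrey germ with nonzero limit $(m_2-m_1)^2$, hence has a Gevrey square root — produces a solution $p$ with $p \to 0$. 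Here I use that the ring $R$ of regular germs (equivalently $R(S)$ in the $x$-free case) is a differential algebra closed under the operations needed: reciprocals of germs with nonzero limit, and square roots of germs with nonzero limit. This lands us at an upper-triangular Gevrey matrix $\mtx{a' & b' \\ 0 & d'}$ with $a' \to m_1$, $d' \to m_2$. A symmetric step, conjugating by $\mtx{1 & q \\ 0 & 1}$, removes $b'$: the condition is the \emph{linear} (in $q$) equation $b' + (a' - d')q = 0$, solved by $q = b'/(d'-a')$, a Gevrey germ since $d' - a' \to m_2 - m_1 \neq 0$.

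Composing the three gauge transformations gives the desired invertible Gevrey $\GG$ with $\GG \AA \GG^{-1}$ diagonal; its diagonal entries are Gevrey germs specialising to $m_1, m_2$. The main obstacle — really the only nontrivial point — is verifying that the ring of Gevrey germs is closed under the algebraic operations invoked (inversion and square roots of germs bounded away from zero), i.e.\ that the Gevrey bounds \eqref{190909170232} are preserved; this is standard (Gevrey series, like convergent ones, form a ring closed under such operations, with the bounds deteriorating in a controlled way on each proper subsector), but it is the step where one must be slightly careful about the passage between a germ and its representatives on subsectors $S' \subset S$, and about the strongly-Gevrey convention when $\Theta$ is a single direction. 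Everything else is routine linear algebra over that differential algebra.
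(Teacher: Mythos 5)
Your proposal is correct and takes essentially the same approach as the paper: reduce to a Riccati-type quadratic for an off-diagonal gauge coefficient, and solve it in the ring of Gevrey germs using an implicit-function-type argument. The paper's proof conjugates once by $\begin{pmatrix}1 & g_{12}\\ g_{21}&1\end{pmatrix}$ and lands on \emph{two uncoupled} quadratics, each handled by its Asymptotic Implicit-Function-Type Lemma (\autoref{190315135056}), which is precisely the ``implicit function theorem for Gevrey germs'' you invoke; you instead split the conjugation into two triangular steps, replacing one of the quadratics with a linear equation --- a minor economy, not a different route. One caution on your ``more elementary'' alternative: the quadratic formula for $p$ carries a factor $1/b$, and $b(\epsilon)\to 0$, so the formula does not \emph{manifestly} produce a Gevrey germ; you must choose the root in which the $b$ cancels --- e.g.\ write $1-\sqrt{1+\delta}=\delta\,h(\delta)$ with $\delta = 4bc/(a-d)^2$ and $h$ analytic at $0$, so that $p = 2c\,h(\delta)/(a-d)$ --- and only then is Gevreyness clear from closure under composition with analytic germs and inversion of units. (There is also a harmless sign slip: the lower-left entry of $\GG\AA\GG^{-1}$ vanishes when $c+(a-d)p-bp^2=0$, not $c+(d-a)p-bp^2=0$.)
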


Thus, there are Gevrey germs $\rho_1, \rho_2$ on the arc $\hat{\Theta}$ such that $\lim\limits_{\hbar \to 0} \rho_i (\hbar) = m_i$, and an invertible matrix $\GG = \GG (\hbar)$ of Gevrey germs on $\hat{\Theta}$ such that
\eqn{
	\GG \AA (0, \hbar) \GG^{-1} = \mtx{ \rho_1 (\hbar) & \HIDE{0} \\ \HIDE{0} & \rho_2 (\hbar)}
\fullstop
}
We will refer to the pair $\set{\rho_1, \rho_2}$ as the \dfn{polar data}.
It is also easy to see that polar data is an invariant of the system $\AA$.

\paragraph{Spectral data.}
If a system $\AA$ has generic classical polar data $\set{m_1, m_2}$, we will write its classical spectral data $\set{\eta_1, \eta_2}$ and its polar data $\set{\rho_1, \rho_2}$ as follows:
\eqn{
	\eta_i (x) = m_i + x \bar{\eta}_i (x)
\qqtext{and}
	\rho_i (x) = m_i + \hbar \bar{\rho}_i (\hbar)
\fullstop
}
We will refer to the set $\set{\lambda_1, \lambda_2}$ of germs of regular functions
\eqn{
	\lambda_i (x, \hbar)
		\coleq m_i + x \bar{\eta}_i (x) + \hbar \bar{\rho}_i (\hbar)
}
as the \dfn{spectral data} of the system $\AA$.
Spectral data is a complete formal invariant in the following sense.

\begin{defn}{190906173926}
Let $\lambda_1, \lambda_2 \in R$ be any pair of germs of regular functions.
We will refer to the diagonal system $\Lambda = \diag (\lambda_1, \lambda_2)$ as the \dfn{formal normal form} corresponding to the spectral data $\set{\lambda_1, \lambda_2}$.
\end{defn}

\begin{thm}{190325154659}
Given a system $\AA$ with generic classical polar data $\set{m_1, m_2}$, let $\lambda_1, \lambda_2$ be its spectral data.
Then $\AA$ is formally gauge equivalent to the formal normal form 
\eqn{
	\Lambda = \Lambda (x, \hbar) \coleq \mtx {\lambda_1 (x, \hbar) & \HIDE{0} \\ \HIDE{0} & \lambda_2 (x, \hbar) }
\fullstop
}
\end{thm}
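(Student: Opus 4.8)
The plan is to normalise $\AA$ stage by stage in powers of $\epsilon$. First I would use classical theory: since the classical polar data $\set{m_1,m_2}$ is generic, the $\epsilon$-leading-order matrix $\AA_0(x)$ is holomorphically diagonalisable near $x = 0$ (e.g.\ \cite[\S25.2]{MR0460820}), say $\GG_0 \AA_0 \GG_0^{-1} = \diag(\eta_1(x),\eta_2(x))$ for an invertible matrix $\GG_0 = \GG_0(x)$ of convergent power series, where $\set{\eta_1,\eta_2}$ is the classical spectral data. Regarding $\GG_0$ as an ($\epsilon$-independent, hence formal) gauge transformation and replacing $\AA$ by $\GG_0 \bullet \AA$, I may assume from now on that $\AA(x,\epsilon) = \Lambda_0(x) + \epsilon\,\AA^{(1)}(x,\epsilon)$ with $\Lambda_0 \coleq \diag(\eta_1,\eta_2)$ and $\AA^{(1)}$ regular; note that $\eta_1(0) = m_1 \neq m_2 = \eta_2(0)$, so $\eta_1 - \eta_2$ is invertible in $\Complex\set{x}$. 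Next I would construct a formal gauge transformation $\hat\GG = \bbid + \sum_{k\geq 1}\epsilon^k\Gamma_k$, with each $\Gamma_k = \Gamma_k(x)$ a germ of holomorphic matrix, carrying $\AA$ into a diagonal system $\diag(\mu_1,\mu_2)$, $\mu_i \in \hat R$, by solving $\hat\GG \bullet \AA = \diag(\mu_1,\mu_2)$ recursively in $\epsilon$. At order $\epsilon^k$ this is a homological equation $\operatorname{ad}_{\Lambda_0}(\Gamma_k) + B_k(x) = D_k(x)$, where $B_k$ is an expression built from $\AA^{(1)}$ and $\Gamma_1,\dots,\Gamma_{k-1}$ and $D_k$ is the order-$\epsilon^k$ diagonal correction. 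Since $\operatorname{ad}_{\Lambda_0}$ annihilates diagonal matrices and multiplies the off-diagonal entries by $\pm(\eta_1 - \eta_2)$, which is invertible near $x = 0$, the equation is solved by taking $D_k$ to be the diagonal part of $B_k$ and the off-diagonal part of $\Gamma_k$ to be $-\operatorname{ad}_{\Lambda_0}^{-1}$ of the off-diagonal part of $B_k$ (the diagonal part of $\Gamma_k$ may be taken to be zero). Iterating and using transitivity of $\:\hat{\sim}\:$ gives $\AA \:\hat{\sim}\: \diag(\mu_1,\mu_2)$ with $\mu_i(x,\epsilon) = \eta_i(x) + \epsilon\,c_i(x,\epsilon)$, $c_i \in \hat R$.

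It then remains to remove from each $\mu_i$ the part depending on both $x$ and $\epsilon$ and to recognise what is left. A diagonal gauge transformation $H = \diag(h_1,h_2)$ acts on a diagonal system purely through its differential term, $\mu_i \mapsto \mu_i - \epsilon x\,\de_x\log h_i$, so I would choose $h_i$ by solving $x\,\de_x\log h_i = c_i(x,\epsilon) - c_i(0,\epsilon)$: order by order in $\epsilon$ this reads $x\,\de_x\ell_{i,j}(x) = c_{i,j}(x) - c_{i,j}(0)$, whose right-hand side vanishes at $x = 0$ and hence has a unique holomorphic solution $\ell_{i,j}$ vanishing at $x = 0$; thus $\log h_i \in \hat R$ and $h_i = \exp(\log h_i)$ is an invertible regular $\epsilon$-series. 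After this transformation the diagonal entries become $\eta_i(x) + \epsilon\,c_i(0,\epsilon) = m_i + x\bar\eta_i(x) + \epsilon\,c_i(0,\epsilon)$, which already has the shape of the spectral data. To identify $c_i(0,\epsilon)$ with the polar data component $\bar\rho_i$, observe that evaluating any formal gauge transformation at $x = 0$ kills its differential term (which carries a factor of $x$), so formal equivalence preserves the spectrum of the residue $\AA(0,\epsilon)$ as a pair of formal $\epsilon$-series; by \autoref{190315132922} that spectrum is $\set{\rho_1,\rho_2} = \set{m_i + \epsilon\bar\rho_i}$, and matching it against $\set{m_i + \epsilon\,c_i(0,\epsilon)}$ by leading term (using $m_1 \neq m_2$, with both sides ordered by increasing real part) forces $c_i(0,\epsilon) = \bar\rho_i$ as formal series. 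Hence $\AA$ is formally gauge equivalent to $\diag(\lambda_1,\lambda_2) = \Lambda$ with $\lambda_i = m_i + x\bar\eta_i(x) + \epsilon\bar\rho_i(\epsilon)$, as claimed.

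I expect the main difficulty to lie not in the construction — a routine recursion in $\hat R$ needing no estimates, since it is purely formal — but in the bookkeeping that pins the normal form down to \emph{exactly} the spectral data. The two facts that make this work are that Stage 1 fixes the $\epsilon$-leading order to be the classical spectral data $\set{\eta_1,\eta_2}$, and that the spectrum of the residue is a formal-gauge invariant (because the differential part of the gauge action vanishes at $x = 0$), which forces the $x = 0$ value of the normal form to be the polar data $\set{\rho_1,\rho_2}$; together these leave no room other than $\lambda_i = m_i + x\bar\eta_i(x) + \epsilon\bar\rho_i$.
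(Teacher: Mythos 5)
Your construction follows a genuinely different route from the paper's. The paper first reduces, via \Autoref{190321134347}, to the case that \emph{both} the leading order $\AA_0(x)$ and the residue $\AA(0,\epsilon)$ are already diagonal, then searches for $\hat{\GG}$ with unit diagonal so that the matrix equation $\epsilon x\,\de_x\hat{\GG}=\hat{\GG}\AA-\Lambda\hat{\GG}$ uncouples into two scalar Riccati-type equations for $\hat g_{12},\hat g_{21}$, each handed directly to the Formal Existence Lemma (\Autoref{190903162627}). You instead diagonalise only $\AA_0$, run a Poincar\'e--Dulac homological recursion for $\hat{\GG}=\bbid+\sum_{k\geq1}\epsilon^k\Gamma_k$ to reach a diagonal $\diag(\mu_1,\mu_2)$ not yet in normal form, strip off the $x$-dependent part of $c_i$ by a diagonal gauge, and pin down $c_i(0,\epsilon)=\bar\rho_i$ via invariance of the residue spectrum under formal gauge equivalence (a correct and clean observation that spares you from diagonalising the residue up front). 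The decomposition into three transformations and the invariance argument are sound as far as the algebra goes.

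However, there is a genuine gap exactly where you declare the recursion ``purely formal, needing no estimates''. To conclude $\AA\:\hat{\sim}\:\Lambda$ in the paper's sense you must exhibit $\hat{\GG}\in\hat{G}=\GL(2,\hat{R})$, and membership in $\hat{R}$ is not a mere formal-power-series condition: by the paper's definition a regular $\epsilon$-series has coefficients satisfying a Gevrey bound $|\Gamma_k(x)|\leq\CC\MM^kk!$ uniformly on some disc. Your homological recursion produces each $\Gamma_k$ as a germ of a holomorphic matrix function but says nothing about the growth of $\sup_x|\Gamma_k(x)|$ in $k$. Establishing that bound is precisely the content of the paper's Formal Existence Lemma (\Autoref{190903162627}), proved there by a majorant argument: construct a bounding sequence $(\MM_k)$, encode its recursion as a functional equation for the generating series $p(t)=\sum\MM_k t^k$, and invoke the Holomorphic Implicit Function Theorem to show $p$ is convergent, hence $\MM_k\leq\CC\MM^k$. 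You would need an analogous estimate for your $\Gamma_k$ (and for the diagonal correction $h_i$ in your last step); note in particular that your recursion feeds $x\,\de_x\Gamma_{k-1}$ into the equation for $\Gamma_k$, so controlling the effect of $x$-differentiation on the Gevrey bound — typically by Cauchy estimates with a controlled shrinking of the $x$-disc — is part of the work that the ``purely formal'' framing omits.
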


The proof requires a few preliminary comments.

Concretely, suppose we are given a singularly-perturbed system \eqref{190320233436} defined over a domain $D \times S$ of the form \eqref{190321000908}.
Suppose its classical residue matrix $\AA_{00}$ has distinct eigenvalues $m_1, m_2$.
Then there is a subdisc $D_0 \subset D$ and an invertible $2\times 2$-matrix $\hat{\GG} = \hat{\GG} (x, \hbar)$ whose entries are regular formal $\hbar$-series defined over $D_0$ such that the transformation $\psi = \hat{\GG} (x, \hbar) \phi$ carries the given system into the diagonal system of the form
\eqn{
	\llap{$\Lambda \qquad:\qquad$} \hbar x \del_x \phi + \mtx{ \lambda_1 (x, \hbar) & \HIDE{0} \\ \HIDE{0} & \lambda_2 (x, \hbar) } \phi = 0
\fullstop{,}
}
for some regular functions $\lambda_i (x, \hbar)$ defined on $D_0 \times S_0$ where $S_0 \subset S$ is a subsector with the same opening.

The first step is to diagonalise the leading-order and the residue of the system $\AA$, and this can be done using regular gauge transformations.
The following lemma is quite evident from the above discussion.

\begin{lem}[diagonalisation of spectral data]{190321134347}
Given a system $\AA$ with generic classical polar data $\set{m_1, m_2}$, let $\lambda_1, \lambda_2$ be its spectral data.
Then $\AA$ is regularly gauge equivalent to a system of the form $\Lambda (x, \hbar) + \BB (x, \hbar)$, where $\Lambda$ is the formal normal form corresponding to $\lambda_1, \lambda_2$, and $\BB (x, \hbar)$ is a system with the following limiting properties:
\eqntag{\label{190909144502}
	\lim_{x \to 0} \BB (x, \hbar) = 0
\qqtext{and}
	\lim_{\hbar \to 0} \BB (x, \hbar) = 0
\fullstop
}
\end{lem}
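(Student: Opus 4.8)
The plan is to diagonalise, one after the other, the two leading-order pieces of $\AA$ — the $\epsilon$-leading-order matrix $\AA_0 (x)$ and the residue $\AA (0, \epsilon)$ — by regular gauge transformations, and then to define $\BB$ to be the difference between the resulting system and the formal normal form $\Lambda$. The two tools needed are already available: in the $x$-direction one uses the classical convergent diagonalisation of $\AA_0 (x)$, valid because the classical polar data $\set{m_1, m_2}$ is generic (see \cite[\S25.2]{MR0460820}), and in the $\epsilon$-direction one uses \Autoref{190315132922}. The only genuine content is the verification that after these two steps the $\epsilon$-leading-order is \emph{exactly} $\diag (\eta_1, \eta_2)$ and the residue is \emph{exactly} $\diag (\rho_1, \rho_2)$, and that these two facts force both limits in \eqref{190909144502} to vanish.

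First I would choose $\GG_0 = \GG_0 (x) \in \GL (2, \Complex \set{x})$, independent of $\epsilon$, diagonalising $\AA_0 (x)$ to $\diag (\eta_1 (x), \eta_2 (x))$ on a suitable subdisc $D_0 \subset D$; being $\epsilon$-independent and holomorphic in $x$, such a $\GG_0$ is a regular gauge transformation. Since the differential operator in the system is $\epsilon x \de_x$, the derivative-correction term produced by applying $\GG_0$ carries a factor of $\epsilon x$, so it contributes nothing to the $\epsilon$-leading order and nothing to the residue. Hence $\GG_0 \bullet \AA$ has $\epsilon$-leading-order matrix exactly $\diag (\eta_1 (x), \eta_2 (x))$, and its residue has leading order $\diag (m_1, m_2)$.

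Next I would apply \Autoref{190315132922} to the residue $(\GG_0 \bullet \AA) (0, \epsilon)$, whose leading order $\diag (m_1, m_2)$ has distinct eigenvalues; this produces a Gevrey matrix $\GG_1 = \GG_1 (\epsilon)$, which — as in the proof of that proposition — may be taken with leading term $\idd$, such that conjugation by $\GG_1$ carries $(\GG_0 \bullet \AA) (0, \epsilon)$ to $\diag (\rho_1 (\epsilon), \rho_2 (\epsilon))$ on some subsector $S_0 \subset S$ with the same opening. As $\GG_1$ is $x$-independent, it is a regular gauge transformation on $D_0 \times S_0$ producing no derivative correction, and conjugation by it leaves the $\epsilon$-leading order untouched (because its leading term is $\idd$). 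Therefore $\tilde{\AA} \coleq \GG_1 \bullet (\GG_0 \bullet \AA)$ satisfies $\tilde{\AA} (x, 0) = \diag (\eta_1 (x), \eta_2 (x))$ and $\tilde{\AA} (0, \epsilon) = \diag (\rho_1 (\epsilon), \rho_2 (\epsilon))$. Setting $\BB \coleq \tilde{\AA} - \Lambda$ and using $\lambda_i (x, \epsilon) = m_i + x \bar{\eta}_i (x) + \epsilon \bar{\rho}_i (\epsilon)$ together with $\eta_i (x) = m_i + x \bar{\eta}_i (x)$ and $\rho_i (\epsilon) = m_i + \epsilon \bar{\rho}_i (\epsilon)$, one reads off $\Lambda (x, 0) = \diag (\eta_1, \eta_2)$ and $\Lambda (0, \epsilon) = \diag (\rho_1, \rho_2)$, whence $\lim_{\epsilon \to 0} \BB = 0$ and $\lim_{x \to 0} \BB = 0$; and $\GG \coleq \GG_1 \GG_0$ is the desired regular gauge transformation.

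I do not expect a deep obstacle here — the argument is essentially careful bookkeeping — but two points need attention. First, the derivative-correction term of the gauge action is proportional to $\epsilon x$ and hence is invisible both to the $\epsilon$-leading order and to the residue. Second, the Gevrey gauge transformation in the second step must be the identity to leading order, so that it does not disturb the $\epsilon$-leading-order diagonalisation achieved in the first step; both points are guaranteed by the explicit constructions being invoked.
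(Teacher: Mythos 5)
Your proof is correct and follows essentially the same plan as the paper's: two successive diagonalisations (one in each variable, the second normalised to the identity at its base point so as not to undo the first), combined with the observation that the derivative-correction term of the gauge action carries an $\epsilon x$ prefactor and is therefore invisible in both the $x \to 0$ and $\epsilon \to 0$ limits. The only difference is cosmetic: the paper diagonalises the residue $\AA(0,\epsilon)$ first via $\HH_1(\epsilon)$ and then the $\epsilon$-leading order via $\HH_2(x)$ with $\HH_2(0) = \idd$, whereas you perform the $x$-direction diagonalisation first and then the $\epsilon$-direction one with $\GG_1(0) = \idd$ — both orderings work for the same reason.
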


Concretely, if $\AA$ is defined over a standard domain $D \times S$, then there is a standard subdomain $D_0 \times S_0$ and an invertible matrix $\GG = \GG (x, \hbar)$ of regular functions on $D_0 \times S_0$ such that the transformation $\psi = \GG \phi$ carries the given system $\AA$ to the following system:
\eqntag{\label{190321135803}
	\llap{$\Lambda + \BB \qquad:\qquad$}
	\hbar x\dv{}{x} \phi + \Big( \Lambda (x, \hbar) + \BB (x, \hbar) \Big) = 0
\fullstop{,}
}
where $\BB (x, \hbar)$ is a $2 \times 2$-matrix of regular functions on $D_0 \times S_0$ satisfying \eqref{190909144502}.

\begin{proof}[Proof of {\Autoref{190325154659}}]
For simplicity of notation, assume that the $\hbar$-leading order part $\AA_0 (x)$ and the residue $\AA (0, \hbar)$ have already been diagonalised to $\Lambda (x, \hbar)$ using \Autoref{190321134347}.
Thus, if we write
\eqntag{\label{190909160057}
	\AA (x, \hbar) 
		= \mtx{ a_{11} (x, \hbar) & a_{12} (x, \hbar) \\ a_{21} (x, \hbar) & a_{22} (x, \hbar) }
\fullstop{,}
}
then 
\eqntag{\label{190909160105}
	\AA_0 (x) = \mtx{ \eta_1 (x) & \HIDE{0} \\ \HIDE{0} & \eta_2 (x)}
\qtext{and}
	\AA_{00} = \mtx{ m_1 & \HIDE{0} \\ \HIDE{0} & m_2 }
\fullstop
}
We search for a gauge transformation $\hat{\GG} = \hat{\GG} (x, \hbar)$ in the following almost form:
\eqntag{\label{190909160118}
	\GG (x, \hbar) \coleq \mtx{1 & g_{12} (x, \hbar) \\ g_{21} (x, \hbar) & 1}
\fullstop{,}
}
where $g_{ij} (x, \hbar)$ are to be solved for.
Then $\hat{\GG}$ must satisfy the following matrix differential equation:
\eqn{
	\hbar x \del_x \hat{\GG} = \hat{\GG} \AA - \Lambda \hat{\GG}
\fullstop
}
It yields four scalar equations:
\begin{gather}\label{190909155255}
	\eta_1 = a_{11} + a_{21} g_{12},
\qquad
	\eta_2 = a_{22} + a_{12} g_{21}
\fullstop{,}
\\
\label{190909160123}
	x \hbar \del_x g_{12} = a_{12} + a_{22} g_{12} - \eta_1 g_{12},
\qquad
	x \hbar \del_x g_{21} = a_{21} + a_{11} g_{21} - \eta_2 g_{21}
\fullstop
\end{gather}
Substituting expressions \eqref{190909155255} for $\eta_i$ into \eqref{190909160123}, we obtain two uncoupled nonlinear first order differential equations:
\eqntag{\label{190909160003}
\begin{gathered}
	x \hbar \del_x g_{12} 
		= a_{12} + (a_{22} - a_{11} ) g_{12} - a_{21} g_{12}^2
\fullstop{,}
\\
	x \hbar \del_x g_{21}
		= a_{21} + (a_{11} - a_{22} ) g_{21} - a_{12} g_{21}^2
\fullstop
\end{gathered}
}
Thanks to \eqref{190909160105}, the $\hbar$-leading order of the coefficient $(a_{ii} - a_{jj})$ is $m_i - m_j$, which is nonzero by the assumption that the classical polar data $\set{m_1, m_2}$ is generic.
Furthermore, the leading-order of the coefficients $a_{12}, a_{21}$ are $0$, thanks again to \eqref{190909160105}.
Both differential equations \eqref{190909160123} are formal singularly perturbed Riccati equations, so by the Formal Existence and Uniqueness Theorem (see, for example, \cite[Theorem 3.8]{MY2008.06492}), we obtain formal $\hbar$-power series solutions $\hat{g}_{ij} (x, \hbar)$ satisfying differential equations \eqref{190909160123}.
Therefore,
\eqn{
	\hat{\GG} (x, \hbar) \coleq \mtx{1 & \hat{g}_{12} (x, \hbar) \\ \hat{g}_{21} (x, \hbar) & 1}
}
is the desired formal gauge transformation.
\end{proof}

\newpage
\subsection{Triangularisation}

A given generic and nonresonant system $\AA$ can always be formally gauge transformed into its formal normal form, but this usually cannot be done by using regular gauge transformations.
However, we can use regular gauge transformations to achieve a simplification of $\AA$ which is almost as good.

\enlargethispage{10pt}
\begin{thm}[Triangularisation Theorem]{190325143646}
Let $\Lambda$ be a generic and nonresonant formal normal form.
Then any system $\AA \in \sfop{Syst} (\Lambda)$ is regularly gauge equivalent to an upper-triangular system of the form $\Lambda + \UU$, where
\eqntag{\label{190907130607}
	\UU = \mtx{ 0 & u (x, \hbar) \\ 0 & 0}
}
for some regular function germ $u = u (x, \hbar) \in R$ which has the following properties:
\eqn{
	\lim_{\hbar \to 0} u (x, \hbar) = 0
\qqtext{and}
	\lim_{x \to 0} u (x, \hbar) = 0
\fullstop
}
uniformly in $x$ and in $\hbar$, respectively.
\end{thm}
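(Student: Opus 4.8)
The plan is to reduce the triangularisation problem to a single scalar Riccati differential equation in the two variables $x$ and $\epsilon$, and then to solve that equation by Borel resummation, following the method of Koike--Sch\"afke.

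First I would invoke \Autoref{190321134347} to assume from the outset that $\AA = \Lambda + \BB$, where $\BB = (b_{ij})$ is a matrix of regular function germs with $\lim_{x\to0}\BB = \lim_{\epsilon\to0}\BB = 0$; in particular each entry $b_{ij}$ vanishes at $x=0$ and at $\epsilon=0$, so every quotient $b_{ij}/(\epsilon x)$ is again a regular germ. Next I would search for the triangularising gauge transformation in lower-triangular form $\GG = \mtx{ p & 0 \\ q & r }$, which is invertible wherever $p r \ne 0$. Requiring $\GG$ to carry $\Lambda + \BB$ into an upper-triangular system $\Lambda + \UU$ with $\UU = \mtx{0 & u \\ 0 & 0}$ amounts to the matrix equation $\epsilon x \de_x \GG = \GG(\Lambda + \BB) - (\Lambda + \UU)\GG$, which breaks into four scalar equations. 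Its $(1,2)$-entry is the purely algebraic relation $u = (p/r)\,b_{12}$; its $(1,1)$- and $(2,2)$-entries are the first-order linear equations
\eqn{
	\epsilon x \de_x \log p = b_{11} - b_{12}(q/r)
	\qqtext{and}
	\epsilon x \de_x \log r = b_{12}(q/r) + b_{22}
	\fullstop
}
and its $(2,1)$-entry, once we set $g \coleq q/r$ and use the previous line to eliminate $\de_x r$, collapses to the single scalar Riccati equation
\eqn{
	\epsilon x \de_x g = b_{21} + \big( \lambda_1 - \lambda_2 + b_{11} - b_{22} \big) g - b_{12} g^2
	\fullstop
}
The whole theorem then follows once I produce a regular solution germ $g$ of this Riccati equation: given such a $g$, the right-hand sides of the two linear equations are regular germs vanishing at $x=0$ and at $\epsilon=0$ (since $b_{11}, b_{12}, b_{22}$ are and $g$ is bounded), hence equal $\epsilon x$ times a regular germ, so integrating in $x$ from the origin produces regular germs $\log p, \log r$ vanishing at $x=0$; thus $p, r$ are regular germs equal to $1$ at $x=0$, in particular bounded and bounded away from zero, so $\GG$ is a genuine invertible regular gauge transformation, and $u = (p/r)\,b_{12}$ is a regular germ which vanishes at $x=0$ (because $b_{12}$ does and $p/r$ is bounded) and at $\epsilon=0$ (for the same reason), uniformly. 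Composing with the transformation of the reduction step then yields the theorem.

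It therefore remains to solve the Riccati equation, and this is where the real work lies. The coefficient $\lambda_1 - \lambda_2 + b_{11} - b_{22}$ of the linear term takes the nonzero value $m_1 - m_2$ at $x = \epsilon = 0$ by genericity, while $b_{21}$ and $b_{12}$ vanish there; hence the Formal Existence Lemma (\Autoref{190903162627}) provides a formal solution $\hat g = \sum_{k\ge1} g_k(x)\epsilon^k \in \hat R$, and an inspection of the recursion (each step dividing by the invertible germ $\eta_1 - \eta_2$ and applying one $x$-derivative) shows simultaneously that every coefficient $g_k$ is holomorphic and vanishes at $x=0$ (using $b_{21}(0,\epsilon) = 0$), and that $\hat g$ is of Gevrey class $1$ in $\epsilon$. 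The decisive---and hardest---step is then to Borel-resum $\hat g$ into a genuine regular germ. Here the nonresonance hypothesis $\Re\big(e^{-i\theta}(m_1 - m_2)\big) \ne 0$ for all $\theta \in \Theta$ enters exactly as the condition keeping the singularities of the Borel transform of $\hat g$ (which are governed, to leading order near $x = 0$, by the logarithmic action integral $\int (\lambda_1 - \lambda_2)\,\mathrm{d}x/x$) off the Laplace contours in every direction of $\hat{\Theta}$; the Laplace sums along those directions all carry $\hat g$ as asymptotic expansion, agree on overlaps, and so glue to a single regular function $g$ on a standard subdomain $D_0 \times S_0$ whose sector $S_0$ has opening exactly $\hat{\Theta}$---the same opening as $S$, which is the crucial feature. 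This is the point at which the argument of Koike--Sch\"afke on the Borel summability of WKB solutions is transplanted to the present regular-singular setting, and I expect it to be the main obstacle; everything else in the proof---the algebraic reduction above and the two $x$-quadratures---is routine.
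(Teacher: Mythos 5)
Your proof takes essentially the same route as the paper's. Your single lower-triangular gauge transformation factors as the product of the paper's unipotent step $\GG_1$ and diagonal step $\GG_2$, and the Riccati equation you derive for $g = q/r$ coincides exactly with the paper's equation for $s$; the Borel-resummation step you flag as ``the main obstacle'' is precisely the content of the paper's Main Asymptotic Existence Lemma (\Autoref{190312131608}), whose hypotheses your Riccati satisfies --- $b_{12}, b_{21}$ have vanishing $\epsilon$-leading order by \Autoref{190321134347}, and $\Re\big(e^{i\theta}(m_1 - m_2)\big) < 0$ follows from the ordering $m_1 \prec m_2$ --- so the proof closes by invoking that lemma directly rather than re-deriving it.
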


Concretely, suppose we are given a singularly-perturbed system \eqref{190320233436} defined over a domain $D \times S$ of the form \eqref{190321000908}.
Suppose its classical residue matrix $\AA_{00}$ has distinct nonresonant eigenvalues $m_1, m_2$ ordered like $m_1 \prec m_2$.
Then there is a subdisc $D_0 \subset D$, a subsector $S_0 \subset S$ with the same opening, and an invertible $2\times 2$-matrix $\GG = \GG (x, \hbar)$ whose entries are regular functions defined over $D_0 \times S_0$ such that the transformation $\psi = \GG (x, \hbar) \phi$ carries the given system $\AA$ into an upper-triangular system of the form
\eqntag{\label{190321135803}
\llap{$\Lambda + \UU \qquad:\qquad$}
	\hbar x\del_x \phi + \Big( \Lambda (x) + \UU (x, \hbar) \Big) \phi = 0
\fullstop
}

\begin{proof}[Proof of {\Autoref{190325143646}}]
For simplicity of notation, assume that the $\hbar$-leading order part $\AA_0 (x)$ and the residue $\AA (0, \hbar)$ have already been diagonalised to $\Lambda (x, \hbar)$ using \Autoref{190321134347}.
Thus, if we write
\eqntag{\label{190226183827}
	\AA (x, \hbar) 
		= \mtx{ a_{11} (x, \hbar) & a_{12} (x, \hbar) \\ a_{21} (x, \hbar) & a_{22} (x, \hbar) }
\fullstop{,}
}
then 
\eqntag{\label{190306180408}
	\AA_0 (x) = \Lambda (x) = \mtx{ \lambda_1 (x) & \HIDE{0} \\ \HIDE{0} & \lambda_2 (x)}
\qtext{and}
	\AA_{00} = \mtx{ m_1 & \HIDE{0} \\ \HIDE{0} & m_2 }
\fullstop
}
We will first transform our system to a triangular system of the form
\eqntag{\label{190321154017}
\llap{$\Lambda + \VV \qquad:\qquad$}
	\hbar x \del_x \phi'
		+ \Big( \Lambda + \VV \Big) \phi'
	= 0
\fullstop{,}
}
where
\eqn{
	\VV = \VV (x, \hbar)
	\coleq 
	\mtx{ v_{11} (x, \hbar) & v_{12} (x, \hbar) \\ & v_{22} (x, \hbar) }
\fullstop{,}
}
for some regular function germs $v_{ij} (x, \hbar) \in R$.
Then we will apply another transformation to kill the diagonal entries of $\VV$ in order to obtain the system \eqref{190321135803}.

Inspired by techniques in \cite{MR0096016,MR0214869,MR0245929} (see also \cite[\S11 and \S25.3] {MR0460820}), we search for a gauge transformation $\GG_1$ in the following unipotent form:
\eqntag{\label{180827192719}
	\GG_1 (x, \hbar) \coleq \mtx{1 & \HIDE{0} \\ s (x, \hbar) & 1}
\fullstop{,}
}
where $s (x, \hbar)$ is to be solved for.
Then matrices $\GG_1$ and $\VV$ must satisfy the following matrix differential equation:
\eqn{
	\hbar x \del_x \GG_1 = \GG_1 \AA - \Lambda \GG_1 - \VV \GG_1
\fullstop
}
It yields four scalar equations:
\begin{gather}\nonumber
	v_{11} = a_{11} - \lambda_1 - a_{12} s,
\qquad
	v_{22} = a_{22} - \lambda_2 + a_{12} s
\qquad
	v_{12} = a_{12}
\\
\label{190320200238}
	x \hbar \del_x s = a_{21} +  (a_{11} - a_{22}) s - a_{12} s^2
\end{gather}
Observe that $v_{ij}$ are expressed entirely in terms of $s$ and the known data, so the problem has been reduced to solving the nonlinear differential equation in \eqref{190320200238}.
This differential equation is a singularly perturbed Riccati equation which on any sufficiently small disc centred at $x = 0$ satisfies the hypotheses of the Exact Existence and Uniqueness Theorem, Theorem 5.1 in \cite{MY2008.06492}.
Thus, equation \eqref{190320200238} has a unique solution which is a regular function germ $s = s (x, \hbar) \in R$.

To remove the diagonal terms of $\VV$, we transform the system \eqref{190321154017} into \eqref{190321135803} via a diagonal transformation of the form
\eqntag{\label{190321160521}
	\GG_2 (x, \hbar) \coleq \mtx{g_{11} (x, \hbar) & \HIDE{0} \\ \HIDE{0} & g_{22} (x, \hbar)}
\fullstop{,}
}
where $g_{ii} (x, \hbar) \in R$ are to be solved for.
The matrices $\GG_2$ and $\UU$ must satisfy the following matrix differential equation
\eqn{
	\hbar x \dv{}{x} \GG_2
		= \GG_2 \Lambda - \Lambda \GG_2 + \hbar x (\GG_2 \VV - \UU \GG_2)
\fullstop
}
It yields three nontrivial scalar equations:
\eqn{
	\del_x g_{11} = v_{11} g_{11}
\fullstop{,}
\qqquad
	\del_x g_{2} = v_{22} g_{22}
\fullstop{,}
\qqquad
	u = v_{12} g_{11} g_{22}^{-1}
\fullstop
}
The first two are easy to solve by integration, and they determine an expression for $u$.
Since $v_{11}, v_{22}$ are regular germs, so are $g_{11}, g_{22}$.
\end{proof}

\subsection{Singularly Perturbed Levelt Filtrations}

The main application of the \hyperlink{190325143646}{Triangularisation Theorem (\Autoref*{190325143646})} in this paper is to construct a filtration on the space of solutions which specialises to the Levelt filtration for every fixed nonzero $\hbar$ and has controlled limits in both $\hbar$ and $x$.

\begin{thm}[The Levelt filtration for singularly perturbed systems]{190325151948}
Let $\Lambda$ be a generic and nonresonant formal normal form, and suppsoe $\AA \in \sfop{Syst} (\Lambda)$ is a system defined over a standard domain $D \times S$.
%
Then the $2$-dimensional vector space $V$ of solutions of $\AA$ has a natural $1$-dimensional subspace $L \subset V$ such that for any nonzero $\hbar \in S$, the filtration $L_\hbar \subset V_\hbar$ is the Levelt filtration for the system $\AA_\hbar$. 
\end{thm}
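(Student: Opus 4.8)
The plan is to deduce the statement directly from the Triangularisation Theorem (\Autoref{190325143646}). First I would apply that theorem to $\AA$, obtaining a standard subdomain $D_0 \times S_0 \subseteq D \times S$ and a regular gauge transformation $\GG = \GG (x, \epsilon)$ over $D_0 \times S_0$ with $\GG \bullet \AA = \Lambda + \UU$, where $\UU = \mtx{ 0 & u \\ 0 & 0}$ and $u (x, \epsilon)$ tends to $0$ both as $x \to 0$ and as $\epsilon \to 0$. For each fixed nonzero $\epsilon \in S_0$, the substitution $\psi = \GG (x, \epsilon) \phi$ is a linear isomorphism from the solution space of $\AA_\epsilon$ onto that of $(\Lambda + \UU)_\epsilon$; and since $\GG (0, \epsilon)$ is invertible, both $\GG (\cdot, \epsilon)$ and $\GG (\cdot, \epsilon)^{-1}$ are holomorphic and bounded near $x = 0$, so $\psi$ and $\phi$ have the same order of growth as $x \to 0$. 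Hence $\GG (\cdot, \epsilon)$ carries the Levelt filtration of $(\Lambda + \UU)_\epsilon$ onto that of $\AA_\epsilon$, and it is enough to exhibit the required line for $\Lambda + \UU$ and transport it back along $\GG$.

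For the triangular system $\epsilon x \de_x \phi + (\Lambda + \UU) \phi = 0$, write $\phi = (\phi_1, \phi_2)$ and recall that $\lambda_i (x, \epsilon) = m_i + x \mu_i (x) + \epsilon \kappa_i (\epsilon)$. The second coordinate satisfies the decoupled scalar equation $\epsilon x \de_x \phi_2 + \lambda_2 \phi_2 = 0$, so the solutions with $\phi_2 \equiv 0$ form an invariant one-dimensional subspace $L' \subset V'$, spanned by the one with $\phi_1 (x, \epsilon) = x^{-(m_1 + \epsilon \kappa_1 (\epsilon))/\epsilon} \exp \big( -\epsilon^{-1} \int_0^x \mu_1 (t) \, dt \big)$, whose exponential factor has a nonzero limit as $x \to 0$. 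Thus, for fixed $\epsilon$, every nonzero element of $L'_\epsilon$ has modulus of order $|x|^{-\nu_1}$ as $x \to 0$, with $\nu_1 \coleq \Re \big( (m_1 + \epsilon \kappa_1 (\epsilon))/\epsilon \big)$, whereas any solution with $\phi_2 \not\equiv 0$ has its second coordinate of order $|x|^{-\nu_2}$, where $\nu_2 \coleq \Re \big( (m_2 + \epsilon \kappa_2 (\epsilon))/\epsilon \big)$, and hence has norm bounded below by a positive multiple of $|x|^{-\nu_2}$. The standing hypothesis $\Re (m_1/\epsilon) < \Re (m_2/\epsilon)$ together with the boundedness of the Gevrey germs $\kappa_1, \kappa_2$ forces $\nu_1 < \nu_2$ once the radius of $S_0$ is taken small enough. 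Consequently the Levelt condition $\lim_{x \to 0} x^{\nu_1 + \delta} \phi = 0$ for all $\delta > 0$ holds exactly on $L'_\epsilon$: it holds there since those solutions grow like $|x|^{-\nu_1}$, and it fails off $L'_\epsilon$ since for $0 < \delta < \nu_2 - \nu_1$ the quantity $x^{\nu_1 + \delta} \phi_2$ blows up. As $\dim L'_\epsilon = 1$, this identifies $L'_\epsilon$ with the unique nontrivial step of the Levelt filtration of $(\Lambda + \UU)_\epsilon$, and I would then put $L \coleq \GG (L') \subset V$.

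The line $L$ is independent of the triangularisation chosen, since a one-dimensional subspace of $V$ is determined by its fibres and those have been forced to be the Levelt subspaces $V^1_\epsilon$. Essentially all of the work is already absorbed into \Autoref{190325143646}; the one thing requiring genuine care — equivalently, the only place the nonzero coupling $u$ could interfere — is the growth comparison. It is harmless precisely because triangularity furnishes an honest invariant line: coupling through $u$ can only speed up the remaining solutions (by variation of parameters they still grow exactly like $x^{-\nu_2}$, up to a logarithm in the resonant case), never below the slow rate $\nu_1$. The second point to verify is the strict inequality $\nu_1 < \nu_2$ after shrinking the sector, which holds because near $\epsilon = 0$ the difference $\nu_2 - \nu_1$ is dominated by $\Re \big( (m_2 - m_1)/\epsilon \big)$, which by hypothesis is positive and of order $|\epsilon|^{-1}$.
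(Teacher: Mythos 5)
Your proof is correct and follows the same overall strategy as the paper — triangularise via \Autoref{190325143646}, analyse the triangular system, and transport the resulting line back through $\GG$ — but your treatment of the triangular system is noticeably more economical. The paper constructs an explicit solution basis $(\phi_1, \phi_2)$ by variation of parameters, and then must invoke the Vanishing Lemma (\Autoref{190328150555}) to pin down the integration constant and establish the bounds $|c_{12}| \lesssim |x|$ and $|c_{12}| \lesssim |\epsilon|$ before reading off the dominance relation. You bypass all of that by observing that the second coordinate decouples, so that $\set{\phi_2 \equiv 0}$ is already an invariant line; the growth comparison $\nu_1 < \nu_2$ (after shrinking the radius of $S_0$) then does the rest. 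For this particular theorem your argument is self-contained and shorter. The paper's more laborious construction is not wasted, however: the explicit second solution $\phi_2$ and the uniform control on $c_{12}$ from the Vanishing Lemma are what make the subsequent results on the filtration of $\Complex^2$ (\Autoref{190908171319} and \Autoref{190909102739}) work — in particular the existence and identification of the limits of $L'$ as $x \to 0$ and $\epsilon \to 0$. So your route gets you to \Autoref{190325151948} faster, but you would still need something like the Vanishing Lemma to go further. Two small points of care that you handle correctly and that are worth keeping explicit: the invertibility of $\GG(\cdot, \epsilon)$ near $x = 0$ for each fixed $\epsilon$ (which is what lets a regular gauge transformation preserve Levelt exponents), and the shrinking of the sector radius so that the contribution of $\kappa_1, \kappa_2$ cannot overturn $\Re(m_1/\epsilon) < \Re(m_2/\epsilon)$.
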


The proof of this theorem is to gauge transform $\AA$ into an upper-triangular system, solve the upper-triangular system explicitly, and use these solutions to construct the desired filtration.

\paragraph{Solving a triangular system.}
Any triangular system \eqref{190321135803} can be solved directly by integration.
To write down an explicit basis of solutions, let $D \times S$ be a domain of the form \eqref{190321000908} where the triangular system \eqref{190321135803} is defined.
We choose any nonzero basepoint $x_\ast \in D$, and introduce the following notation:
\eqn{
	f_i (x, \hbar) \coleq \exp \left( - \int_{x_\ast}^x \lambda_i (t, \hbar) \frac{\dd{t}}{\hbar t} \right)
\qtext{and}
	f_{ij} (x, \hbar) \coleq \exp \left( - \int_{x_\ast}^x \lambda_{ij} (t, \hbar) \frac{\dd{t}}{\hbar t} \right)
\fullstop{,}
}
where $\lambda_{ij} \coleq \lambda_i - \lambda_j$.
If $(\vec{e}_1, \vec{e}_2)$ is the standard basis of $\Complex^2$, then using the method of variation of parameters, we obtain a basis of solutions of the system \eqref{190321135803}:
\eqntag{\label{190306171826}
	\phi_1 \coleq f_1 \vec{e}_1
\qtext{and}
	\phi_2 \coleq f_2
		\big(\vec{e}_2 + c_{12} \vec{e}_1 \big)
\fullstop{,}
}
where
\eqntag{\label{190325155749}
	c_{12} = c_{12} (x, \hbar)
		\coleq \CC f_{12} (x, \hbar)
			- f_{12} (x, \hbar)
				\int_{x_\ast}^x f_{21} (t, \hbar) u (t, \hbar) \dd{t}
\fullstop{,}
}
for an integration constant $\CC$ which is allowed to depend on $\hbar$ and $x_\ast$.

\paragraph{A vanishing lemma.}
Our aim is to construct a basis of solutions $\set{\phi_1, \phi_2}$ which has $\hbar$-asymptotic behaviour that we can control.
For this, we need the following vanishing lemma, whose proof can be found at the end of this subsection.

\begin{lem}{190328150555}
There is a unique way to choose the integration constant $\CC = \CC (\hbar, x_\ast)$ in \eqref{190325155749} such that $c_{12}$ is independent of the basepoint $x_\ast$ and satisfies the following bounds:
\eqnstag{\label{190328165727}
	\big| c_{12} (x, \hbar) \big| &\lesssim |x|
\qqquad \text{as $x \to 0$ uniformly in $\hbar \in S$,}
\\	\label{190328165733}
	\big| c_{12} (x, \hbar) \big| &\lesssim |\hbar|
\qqquad \text{as $\hbar \to 0$ along $I$ uniformly in $x \in D$.}
}
Moreover, $c_{12}$ is holomorphic but possibly multivalued with at most a logarithmic branch singularity at the points $(0, \hbar) \in D \times S$ for $\hbar = \frac{\nu_2 - \nu_1}{n+1}$ for all $n \in \Integer_{\geq 0}$, and admits a uniform Gevrey asymptotic expansion along $\hat{\Theta}$.
\end{lem}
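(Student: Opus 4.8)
The plan is to bypass the explicit integral \eqref{190325155749} and work with the scalar differential equation that $c_{12}$ must satisfy. Substituting $\phi_2$ from \eqref{190306171826} into the triangular system \eqref{190321135803} gives
\[
	\epsilon x \de_x c_{12} + \lambda_{12} (x, \epsilon)\, c_{12} = - u (x, \epsilon)
\fullstop
\]
Its solutions form a torsor over the one-dimensional space of homogeneous solutions, spanned by the branch $g (x, \epsilon) \coleq x^{\nu_{21} (\epsilon)} e^{-P (x, \epsilon)/\epsilon}$ with $\nu_{21} (\epsilon) \coleq \lambda_{21} (0, \epsilon)/\epsilon$ and $P \in R$ vanishing at $x = 0$. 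Rewriting \eqref{190325155749} in terms of $g$ shows that demanding $c_{12}$ be independent of the basepoint $x_\ast$ cuts the freedom in $\CC (\epsilon, x_\ast)$ down to a single free function $c (\epsilon)$ (the homogeneous component); so the claim is that exactly one value of $c (\epsilon)$ realises all of the asserted properties.

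To produce it I would first solve the equation formally. Feeding $\sum_{k \geq 0} \gamma_k (x) \epsilon^k$ into it and using $\lambda_{12} (0,0) = m_1 - m_2 \neq 0$ together with the vanishings $u (0, \epsilon) = 0$ and $\lim_{\epsilon \to 0} u (x, \epsilon) = 0$ supplied by the Triangularisation Theorem (\autoref{190325143646}), one gets $\gamma_0 \equiv 0$ and a recursion exhibiting each $\gamma_k$ as a holomorphic germ at $x = 0$ that vanishes there; the usual majorant estimate, as in the Formal Existence Lemma (\autoref{190903162627}), makes $\hat c_{12} \coleq \sum_k \gamma_k (x) \epsilon^k$ a regular $\epsilon$-series with $\hat c_{12} = O (x)$ and $\hat c_{12} = O (\epsilon)$. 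Borel resumming it by the Main Asymptotic Existence Lemma (\autoref{190312131608}) — legitimate since the opening of $\hat\Theta$ is at least $\pi$ — produces a regular function germ $c_{12}$ with $c_{12} \sim \hat c_{12}$; since Borel summation commutes with the algebraic and differential operations in the equation, $c_{12}$ solves it, is manifestly $x_\ast$-independent, and carries the uniform Gevrey asymptotic expansion along $\hat\Theta$ for free.

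The two growth bounds then come from $\hat c_{12} = O (\epsilon)$ and from $\hat c_{12} = O (x)$ uniformly in $\epsilon$ (every $\gamma_k$ vanishing at $x = 0$), via the Gevrey remainder estimate. I expect the real work to be in making \eqref{190328165733} uniform in $x$ across the \emph{whole} disc $D$, rather than just the subdisc $D_0$ on which the resummation is performed: one must propagate the estimate along the differential equation out to $D$ while controlling the homogeneous factor $g$, and it is here that one shrinks $D_0$ so that the WKB exponential $\exp \!\big( \epsilon^{-1} \int^x \lambda_{21,0} (t)\, \dd{t}/t \big)$ is exponentially small as $\epsilon \to 0$ — possible because $\lambda_{21,0} (0) = m_2 - m_1 \neq 0$ and $\Re \big( (m_2 - m_1)/\epsilon \big) > 0$ throughout $S$, so that $\Re (\nu_{21} (\epsilon)) \log |x|$ contributes a large negative exponent once $|x|$ is small — which simultaneously renders $g$ negligible next to both $|\epsilon|$ and $|x|$ in the ranges that occur.

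Uniqueness follows because any rival admissible coefficient differs from $c_{12}$ by $c (\epsilon) g (x, \epsilon)$; but $g$, being exponentially small in the interior of a sector of opening $\geq \pi$ while its $\epsilon$-derivatives grow near the edges, is not (strongly) Gevrey there and so cannot be absorbed into a regular germ — equivalently, a regular germ on such a sector is determined by its asymptotic expansion — which forces $c (\epsilon) \equiv 0$. The branch structure is the last point: when $\nu_{21} (\epsilon) \notin \Integer_{\geq 1}$ the equation has a unique solution holomorphic at $x = 0$, a convergent $x$-power series, and $c_{12}$ agrees with it there; viewed as a function of $\epsilon$ this solution has a simple pole at each resonance value $\epsilon_n$ characterised by $\lambda_{21} (0, \epsilon_n) = (n+1)\epsilon_n$, i.e. $\epsilon_n = \frac{\nu_2 - \nu_1}{n+1}$, and in $c_{12}$ this pole is cancelled by the homogeneous term, the surviving $\de_\epsilon g|_{\epsilon = \epsilon_n}$ carrying the factor $x^{n+1}\log x$ — which is exactly the logarithmic branch of $c_{12}$ at $(0, \epsilon_n)$, while away from those points $c_{12}$ is single-valued and holomorphic.
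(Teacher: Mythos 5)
Your strategy — derive the scalar ODE $\epsilon x\de_x c_{12} + \lambda_{12} c_{12} = -u$, solve it formally, and resum — is clean, but it runs into a genuine obstruction at the step where you invoke the Main Asymptotic Existence Lemma. Writing the equation in the lemma's form $\epsilon x\de_x s = a + bs + cs^2$ gives $b = -\lambda_{12}$, so $\rho = b_0(0) = -(m_1-m_2) = m_2-m_1$, and the lemma's hypothesis \eqref{190905121249} demands $\Re(\ee^{\ii\theta}(m_2-m_1)) < 0$ for $\theta\in\bar\Theta$. But the paper's standing assumption (used precisely when the lemma is applied in the proof of \Autoref{190325143646}, where $\rho = m_1-m_2$) is that $\Re(\ee^{\ii\theta}(m_1-m_2)) < 0$ — the opposite sign. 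So the lemma does not apply to your equation; the sign of the leading linear coefficient controls the direction along which the homogeneous WKB exponential decays (and correspondingly the direction in the Borel plane in which the successive-approximation scheme converges), and for $c_{12}$ that direction is reversed relative to the equation for $s$ in the triangularisation.

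This is not a formality. The MAEL would hand you a genuine regular germ in the sense of $R$ — in particular holomorphic and single-valued on a full polydisc-times-sector including $x=0$. But the lemma you are trying to prove explicitly asserts that $c_{12}$ acquires logarithmic branch singularities at $(0,\epsilon_n)$ for $\epsilon_n = \frac{\nu_2-\nu_1}{n+1}$, points accumulating at $\epsilon = 0$. So $c_{12}$ cannot be a regular germ, and your middle paragraph (where the MAEL "produces a regular function germ $c_{12}$") is incompatible with your last paragraph (where you correctly identify the $x^{n+1}\log x$ contributions). The multivaluedness is exactly the trace of the ill-controlled homogeneous piece $f_{12}$, which for this equation is exponentially \emph{small} in $\epsilon$ for fixed $0<|x|<1$, not exponentially large as it is for the $s$-equation — so it cannot be disregarded by a naive Borel-sum uniqueness argument without further work.

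The paper takes a more elementary route: it writes out $c_{12}$ explicitly as $-x^{-\nu_{12}/\epsilon}\int^x t^{\nu_{12}/\epsilon}g(t,\epsilon)\dd t$ with $g$ holomorphic, expands $g$ in powers of $x$, integrates term by term, and chooses the antiderivative with no constant term (this is what fixes $\CC$). The bound \eqref{190328165727} is then read off from the resulting series $-x\sum_n g_n(\epsilon)x^n/(n+1+\nu_{12}/\epsilon)$, with the $x\log x$ term appearing exactly when some denominator vanishes; the bound \eqref{190328165733} is obtained by a direct contour estimate on the original integral, parametrising by the increasing modulus of $\Phi_{12}(t)-\Phi_{12}(x)$. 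Your ODE reformulation and formal-solution analysis are both correct and could be a useful organisational device, but the analytic core of the argument — closing the Gevrey estimate in $\epsilon$ in the presence of the "wrong-sign" homogeneous mode and the resonance denominators — needs the direct integral estimate (or a custom replacement for the MAEL); it is not covered by the existence lemmas in the paper.
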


For this unique choice of $\CC$, we write the function $c_{12}$ from \eqref{190325155749} as:
\eqntag{\label{190328153328}
	c_{12} (x, \hbar) 
		\coleq - f_{12} (x, \hbar)
				\int^x f_{21} (t, \hbar) u (t, \hbar) \dd{t}
\fullstop
}


\begin{prop}{190314182433}
The vector functions $\phi_1, \phi_2$, as defined by \eqref{190306171826} with $c_{12}$ given by \eqref{190328153328}, form an ordered basis of solutions $(\phi_1, \phi_2)$ of the triangular system \eqref{190321135803} with the following properties:
\begin{enumerate}
\item $\phi_i$ has the following leading behaviours:
\eqns{
	\phi_i (x, \hbar) &\sim x^{\nu_i / \hbar} \vec{e}_i
\rlap{\qqquad as $x \to 0$, for all $\hbar \in S$;}
\\	\phi_i (x, \hbar) &\sim f_i (x) \vec{e}_i
\rlap{\qqquad as $\hbar \to 0$ in $S$, for all $x \in D^\ast$}
}
\item They satisfy the following dominance relation:
\eqn{
	\phi_1 \prec \phi_2
\qtext{as}
	x \to 0
\fullstop{,}
}
uniformly for all $\hbar \in S$ with $\arg (\hbar) \in \Theta$.
\end{enumerate}
\end{prop}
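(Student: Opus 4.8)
Since $\phi_1,\phi_2$ are already known to form a basis of solutions of \eqref{190321135803} --- this is the variation-of-parameters computation recorded around \eqref{190306171826}, valid for any admissible value of the constant $\CC$ in \eqref{190325155749} and in particular for the one fixed in \Autoref{190328150555} that yields \eqref{190328153328} (equivalently, $[\phi_1\ \phi_2]$ has determinant $f_1f_2$, which never vanishes) --- only the leading behaviours and the dominance relation remain to be proved, the latter being also what makes the basis \emph{ordered}. The plan is to read these off the explicit formulas for $f_1,f_2,c_{12}$, using the estimates of \Autoref{190328150555} to control $c_{12}$.

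For the limit $x\to0$ I would compute $f_i$ directly: writing the spectral data as $\lambda_i(x,\epsilon)=\lambda_i(0,\epsilon)+x\bar{\eta}_i(x)$ with $\lambda_i(0,\epsilon)=m_i+\epsilon\bar{\rho}_i(\epsilon)$ and $\bar{\eta}_i$ holomorphic near $x=0$, and separating the part of the integrand of $f_i$ proportional to $\dd{t}/t$ from the holomorphic remainder, one obtains $f_i(x,\epsilon)=c_i(\epsilon)\,x^{\nu_i/\epsilon}\bigl(1+O(x)\bigr)$ as $x\to0$, with $\nu_i=\nu_i(\epsilon)$ the residue exponent and with a nonvanishing prefactor $c_i(\epsilon)$ (coming from an exponential holomorphic and nonzero at $x=0$, together with the basepoint $x_\ast$). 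For $\phi_1=f_1\vec{e}_1$ this is the first assertion outright; for $\phi_2$, the $\vec{e}_2$-component is $f_2\sim c_2(\epsilon)x^{\nu_2/\epsilon}$ while the $\vec{e}_1$-component $f_2c_{12}$ is, by $|c_{12}|\lesssim|x|$ (\Autoref{190328150555}), negligible relative to the $\vec{e}_2$-component, so $\phi_2\sim x^{\nu_2/\epsilon}\vec{e}_2$ as well. (Here one works on the universal cover of $D^\ast$, or along a fixed approach to $x=0$.) For the limit $\epsilon\to0$ there is nothing to do for $\phi_1=f_1\vec{e}_1$, and for $\phi_2=f_2\vec{e}_2+(f_2c_{12})\vec{e}_1$ the other bound $|c_{12}|\lesssim|\epsilon|$ of \Autoref{190328150555} gives $f_2c_{12}=o(f_2)$ as $\epsilon\to0$ for each fixed $x\in D^\ast$, whence $\phi_2\sim f_2(x)\vec{e}_2$.

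The dominance relation is where the real work lies. By the computation above, up to bounded positive factors $|\phi_i(x,\epsilon)|$ behaves like $|x|^{\Re(\nu_i/\epsilon)}$ as $x\to0$, so it suffices to bound $\Re(\nu_1/\epsilon)-\Re(\nu_2/\epsilon)$ below by a positive constant, uniformly in $\arg\epsilon\in\Theta$. Since $\lambda_i(0,\epsilon)=m_i+\epsilon\bar{\rho}_i(\epsilon)$ with $\bar{\rho}_i$ bounded near $\epsilon=0$, this difference equals $\Re\bigl((m_2-m_1)/\epsilon\bigr)+O(1)$, and nonresonance of $\{m_1,m_2\}$ over $\Theta$ with the ordering $m_1\prec m_2$ forces $\Re\bigl((m_2-m_1)/\epsilon\bigr)$ to exceed a fixed positive multiple of $|\epsilon|^{-1}$ for $\arg\epsilon\in\Theta$; shrinking the radius of $S$ so that this term absorbs the $O(1)$ correction gives a uniform gap $\Re(\nu_1/\epsilon)-\Re(\nu_2/\epsilon)\geq\delta>0$. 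Shrinking $D$ so that $|x|<1$, we conclude $|\phi_1(x,\epsilon)|/|\phi_2(x,\epsilon)|\lesssim|x|^{\delta}\to0$ as $x\to0$, uniformly in $\arg\epsilon\in\Theta$ --- that is, $\phi_1\prec\phi_2$. The one delicate point is exactly this uniformity of the exponent gap, and it is precisely what the nonresonance hypothesis over the full arc $\Theta$ is there to provide; everything else is explicit integration together with \Autoref{190328150555}.
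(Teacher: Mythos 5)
Your proposal is correct and follows essentially the same route as the paper's proof: reduce the leading behaviours to explicit integration of the $f_i$, invoke the two bounds $|c_{12}|\lesssim|x|$ and $|c_{12}|\lesssim|\epsilon|$ from the Vanishing Lemma (\Autoref{190328150555}) to control the $\vec{e}_1$-component of $\phi_2$, and derive the dominance $\phi_1\prec\phi_2$ from the sign of $\Re\bigl((\nu_1-\nu_2)/\epsilon\bigr)$ forced by nonresonance and the ordering $m_1\prec m_2$. The only substantive additions are the Wronskian remark establishing linear independence and the spelled-out exponent-gap estimate, both of which the paper leaves implicit.
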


\begin{proof}
To prove (1), place the basepoint $x_\ast$ on the boundary of $D$.
Note that moving the basepoint amounts to multiplication by a constant (depending on $\hbar$) which does not affect the Levelt exponent as $x \to 0$.
We need to show that for any $\delta > 0$, the vector functions $\phi_1, \phi_2$ have the property
\eqn{
	x^{\nu_i / \hbar + \delta} \phi_i \to 0
\qqtext{as}
	x \to 0
\fullstop{,}
}
which amounts to showing that
\eqntag{\label{190314192202}
	x^{\nu_1 / \hbar + \delta} f_1 \to 0,
\qquad
	x^{\nu_2 / \hbar + \delta} f_2 \to 0,
\qquad
	x^{\nu_2 / \hbar + \delta} f_2 c_{12} \to 0,
}
as $x \to 0$.
Since $\lambda_i = \nu_i + x \mu_i$, the first two of these claims are obvious.
For the third claim, we use the above vanishing lemma (\Autoref{190328150555}), which says that $| c_{12} | \lesssim |x|$.
Since $(\nu_1 - \nu_2) / \hbar < 0$ for all $\arg (\hbar) \in \Theta$, property (2) now follows as well.
The second half of (1) is proved similarly.
\end{proof}

\begin{prop}{190908114229}
Let $\Lambda$ be a generic and nonresonant formal normal form.
Then any system $\AA \in \sfop{Syst} (\Lambda)$ has an ordered basis of solutions $\set{\psi_1, \psi_2}$ with the following properties:
\begin{enumerate}
\item $\psi_i$ has the following leading behaviours:
\eqns{
	\psi_i (x, \hbar) &\sim x^{\nu_i / \hbar} e_i (x, \hbar)
\text{\qqquad as $x \to 0$, for all $\hbar \in S$;}
\\	\psi_i (x, \hbar) &\sim f_i (x) e_i (x, \hbar)
\text{\qqquad as $\hbar \to 0$ in $S$, for all $x \in D^\ast$}
}
where $e_1 (x, \hbar)$ is a regular vector function on $D \times S$, and $e_2 (x, \hbar)$ is, up to terms involving $x \log(x)$, is also a regular vector function on $D^\ast \times S$, and $e_1, e_2$ are linearly independent wherever $e_2$ is well-defined.
\item They satisfy the following dominance relation:
\eqntag{\label{190909100439}
	\psi_1 \prec \psi_2
\qtext{as}
	x \to 0
\fullstop{,}
}
uniformly for all $\hbar \in S$ with $\arg (\hbar) \in \Theta$.
\end{enumerate}
\end{prop}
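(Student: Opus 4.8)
The plan is to triangularise $\AA$ and then transport the explicit basis of solutions of the triangular system provided by \Autoref{190314182433} through the triangularising gauge transformation.

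First, apply the Triangularisation Theorem (\Autoref{190325143646}) to obtain, over a suitable subdomain, an invertible matrix $\GG = \GG (x, \epsilon)$ of regular functions which carries $\AA$ into an upper-triangular system $\Lambda + \UU$ of the form \eqref{190321135803} via the substitution $\psi = \GG \phi$. Let $(\phi_1, \phi_2)$ be the ordered basis of solutions of $\Lambda + \UU$ furnished by \Autoref{190314182433}, with $\phi_1 = f_1 \vec{e}_1$ and $\phi_2 = f_2 (\vec{e}_2 + c_{12} \vec{e}_1)$, where $c_{12}$ is the function in \eqref{190328153328}. Put $\psi_i \coleq \GG \phi_i$. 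Since $\GG$ is invertible and $\psi = \GG\phi$ is exactly the substitution carrying solutions of $\Lambda + \UU$ to solutions of $\AA$, the pair $(\psi_1, \psi_2)$ is an ordered basis of solutions of $\AA$.

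Next I would read off the vector functions $e_i$. From $\phi_1 = f_1 \vec{e}_1$ one gets $\psi_1 = f_1 \, \GG \vec{e}_1$, so set $e_1 \coleq \GG \vec{e}_1$, a regular vector function. From $\phi_2 = f_2 (\vec{e}_2 + c_{12}\vec{e}_1)$ one gets $\psi_2 = f_2 (\GG\vec{e}_2 + c_{12}\, \GG\vec{e}_1)$, so set $e_2 \coleq \GG\vec{e}_2 + c_{12} \, \GG\vec{e}_1$. By \Autoref{190328150555}, $c_{12}$ is bounded by $|x|$ as $x \to 0$ and by $|\epsilon|$ as $\epsilon \to 0$, is holomorphic in $x$ near $0$ with at worst a logarithmic branch there, and is uniformly Gevrey along $\hat{\Theta}$; hence $e_2$ is the regular vector function $\GG\vec{e}_2$ modified by a term which differs from a regular vector function only by a factor involving $x\log x$ and which is uniformly small in both $x$ and $\epsilon$. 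For linear independence, observe that the matrix whose columns are $e_1$ and $e_2$ is $\GG$ multiplied on the right by the unipotent upper-triangular matrix with off-diagonal entry $c_{12}$, so its determinant equals $\det \GG$, which is nowhere zero on the domain of $\GG$; thus $e_1$ and $e_2$ are linearly independent wherever $e_2$ is defined. The leading behaviours in item (1) now follow from those of $\phi_i$ in \Autoref{190314182433}: since $\GG$ is holomorphic and invertible at $x = 0$, in $\phi_i \sim x^{\nu_i/\epsilon}\vec{e}_i$ (as $x \to 0$) we may replace $\vec{e}_i$ by $e_i$; and since $\GG$ is regular, hence has an $\epsilon$-asymptotic expansion with invertible leading term, the same replacement is legitimate in $\phi_i \sim f_i (x) \vec{e}_i$ (as $\epsilon \to 0$).

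Finally, the dominance relation $\psi_1 \prec \psi_2$ as $x \to 0$, uniformly for all $\epsilon \in S$ with $\arg (\epsilon) \in \Theta$, is inherited from $\phi_1 \prec \phi_2$ in \Autoref{190314182433}: replacing $\phi_i$ by $\psi_i = \GG\phi_i$ alters norms only by the factors $\|\GG\|$ and $\|\GG^{-1}\|$, both bounded near $x = 0$, so the ratio $|\psi_1| / |\psi_2|$ still tends to $0$ and the Levelt growth exponents are unchanged. The one place needing care is the bookkeeping for $e_2$ — writing it as a regular vector function modulo $x\log x$ and checking independence from $e_1$ — but no analytic difficulty remains, since the existence and regularity of $\GG$, the size and branching of $c_{12}$, and the leading behaviours and dominance of $\phi_i$ are all already in hand from \Autoref{190325143646}, \Autoref{190328150555}, and \Autoref{190314182433}.
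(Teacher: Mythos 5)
Your proposal is correct and follows the same route as the paper: apply the Triangularisation Theorem (\Autoref{190325143646}), take the basis $(\phi_1,\phi_2)$ from \Autoref{190314182433}, and set $\psi_i \coleq \GG\phi_i$. The paper's proof stops at the observation that regularity of $\GG$ transfers the properties of $\phi_i$ to $\psi_i$; your explicit identification $e_1 = \GG\vec{e}_1$, $e_2 = \GG\vec{e}_2 + c_{12}\,\GG\vec{e}_1$ and the determinant check for linear independence are a correct and useful filling-in of that step, not a different argument.
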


\begin{proof}
By the \hyperlink{190325143646}{Triangularisation Theorem (\Autoref*{190325143646})}, $\AA$ is regularly gauge equivalent to an upper triangular system $\Lambda + \UU$ via a regular gauge transformation $\GG$.
Let $(\phi_1, \phi_2)$ be an ordered basis of solutions of $\Lambda + \UU$ guaranteed by \Autoref{190314182433}.
Define $\psi_i \coleq \GG \phi_i$.
Since $\GG$ is regular, the properties of $\phi_1, \phi_2$ immediately imply the corresponding properties of $\psi_1, \psi_2$.
\end{proof}

\begin{proof}[Proof of \Autoref{190325151948}]
By \Autoref{190908114229}, the vector space $V$ of solutions of $\AA$ has a basis $(\psi_1, \psi_2)$ ordered by dominance as in \eqref{190909100439}.
The $1$-dimensional subspace $L \subset V$ is spanned by the vector $\psi_1 = \psi_1 (x, \hbar)$.
For any fixed nonzero $\hbar \in S$ with $\arg (\hbar) \in \Theta$, the specialisation $\psi_{1, \hbar} (x) \coleq \psi_1 (x, \hbar)$ spans the Levelt filtration $L_\hbar \subset V_\hbar$.
\end{proof}

\begin{proof}[Proof of the Vanishing Lemma, \autoref{190328150555}.]
Notice that although the functions $f_{ij}$ depend on the choice of $x_\ast$, the expression $f_{ij} (x, \hbar) f_{ji} (t, \hbar)$ is independent of it, because
\eqn{
	\int_{x_\ast}^t \hbar_{ij} (u, \hbar) \frac{\dd{u}}{u}
			+ \int_{x_\ast}^x \hbar_{ji} (u, \hbar) \frac{\dd{u}}{u}
		= \int_{x}^t \hbar_{ij} (u, \hbar) \frac{\dd{u}}{u}
\fullstop
}
This makes it clear that it is possible to choose $\CC$ such that $c_{12}$ is independent of $x_\ast$, and that this specifies $\CC$ uniquely.

\textsc{Proof of \eqref{190328165727}.}
Write $\hbar_{ij} (x, \hbar) = \nu_{ij} (\hbar) + x \mu_{ij} (x)$ where $\nu_{ij} \coleq \nu_i - \nu_j$ and $\mu_{ij} \coleq \mu_i - \mu_j$, so
\eqn{
	f_{ij} (x, \hbar)
		= \left( \frac{x}{x_\ast} \right)^{- \nu_{ij} / \hbar}
			\exp \left( - \int_{x_\ast}^x \mu_{ij} (u) \frac{\dd{u}}{\hbar} \right)
\fullstop
}
If we put $g (t, \hbar) \coleq \exp \left(  \int_{x}^t \mu_{12} (u) \frac{\dd{u}}{\hbar} \right) q (t, \hbar)$ (manifestly holomorphic and independent of the basepoint $x_\ast$), then \eqref{190328153328} becomes
\eqn{
	c_{12} (x, \hbar)
	= - x^{-\nu_{12}/\hbar} \int^x t^{\nu_{12}/\hbar} g (t, \hbar) \dd{t}
	= - x^{-\nu_{12}/\hbar} \sum_{n=0}^\infty g_n (\hbar)
			\int^x t^{\nu_{12} / \hbar + n} \dd{t}
\fullstop{,}
}
where we expanded the holomorphic function $g$ as a power series $g (x, \hbar) = \sum g_n (\hbar) x^n$.
If $\nu_{12} / \hbar + n \neq -1$ whenever $g_n \neq 0$, then this integrates to
\eqn{
	c_{12} (x, \hbar) 
		= - x \sum_{n=0}^\infty \frac{g_n (\hbar)}{n+1 + \nu_{12} / \hbar} x^{n}
\fullstop
}
This expression is holomorphic at $x = 0$ and satisfies the bound $\big| c_{12} (x, \hbar) \big| \lesssim |x|$ uniformly in $\hbar \in S_0$ by virtue of the fact that $g (x, \hbar)$ is holomorphic in $U_0 \times S_0$.
If, on the other hand, $\NN \in \Integer_{\geq 0}$ is such that $\nu_{12} / \hbar + \NN = -1$ yet $g_{\NN} \neq 0$, then
\eqn{
	c_{12} (x, \hbar) 
		= - x \sum_{\substack{n=0 \\ n \neq \NN}}^\infty \frac{g_n (\hbar)}{n+1 + \nu_{12} / \hbar} x^{n}
			- g_{\NN} (\hbar) x \log (x)
\fullstop
}
This expression has a logarithmic branch singularity at $x = 0$, but it still satisfies the $\big| c_{12} (x, \hbar) \big| \lesssim |x|$ uniformly in $\hbar \in S_0$.

\textsc{Proof of \eqref{190328165733}.}
Let
\eqn{
	\Phi_{ij} (x) \coleq \int_{x_\ast}^x \lambda_{ij} (u) \frac{\dd{u}}{u}
\fullstop{,}
}
so \eqref{190328153328} becomes
\eqn{
	c_{12} (x, \hbar)
		= - \int^x e^{ \big( \Phi_{12} (t, \hbar) - \Phi_{12} (x, \hbar) \big) / \hbar} q (t, \hbar) \dd{t}
\fullstop
}
Since $q (x, \hbar)$ admits a uniform Gevrey asymptotic expansion along $\Theta$, it follows that $\big| q (x, \hbar) \big|$ is uniformly bounded by a constant.
Then the obvious inequality $\Re \Big( \big( \Phi_{12} (t, \hbar) - \Phi_{12} (x, \hbar) \big) / \hbar \Big) \leq \big| \Phi_{12} (t, \hbar) - \Phi_{12} (x, \hbar) \big| / |\hbar|$ implies
\eqn{
	\big| c_{12} (x, \hbar) \big|
		\lesssim
	\int^x e^{\big| \Phi_{12} (t, \hbar) - \Phi_{12} (x, \hbar) \big| / |\hbar|} \big| \dd{t} \big|
\fullstop
}
We are free to choose the basepoint $x_\ast$ without affecting $c_{12}$: we place $x_\ast$ on the boundary of $D_0$.
For every $x \in U_0$, there is a phase $\theta = \theta (x)$ such that the trajectory eminating from $x_\ast$ hits $x$.
We integrate along this trajectory.
Let 
\eqn{
	r = r (t) \coleq \Big( \Phi_{12} (t, \hbar) - \Phi_{12} (x, \hbar) \Big) e^{-i \theta}
\fullstop
}
Then $r$ is real and positive and satisfies $r = \big| \Phi_{12} (t, \hbar) - \Phi_{12} (x, \hbar) \big|$.
Furthermore, $\del_t \Phi_{12} (t, \hbar)$ is nonvanishing on $D_0$, so $\big| \del_t \Phi_{12} (t, \hbar) \big|$ is bounded below by a constant.
Thus, we find
\eqntag{
	\big| c_{12} (x, \hbar) \big|
		\lesssim
	\int^{r = 0} e^{r / |\hbar|} \dd{r}
	= | \hbar |
\fullstop
\tag*{\qedhere}
}
\end{proof}

\subsection{Another Point of View on the Levelt Filtration}

We can shift our point of view on the Levelt filtration and consider, instead of the vector space of solutions $V$, the vector space $\Complex^2$ on which the differential system $\AA$ is defined in the first place.
The problem is that elements of $V$ have no meaning at the pole $x = 0$ (because the solutions are singular at $x = 0$), they have no meaning at $\hbar = 0$ or more precisely in the limit $\hbar \to 0$ (because the solutions are singularly perturbed).
The filtration also requires a choice of $\log (x)$ for the vector space $V$ to be well-defined\footnote{though this issue is far less crucial and can be resolved by considering the local system of solutions; we will not discuss this point of view here.}
The advantage of going to the vector space $\Complex^2$ is that it is obviously well-defined both at $x = 0$ and $\hbar = 0$.
The discussion in the previous section and especially the Vanishing Lemma (\Autoref{190328150555}) imply the following result.

\begin{prop}{190908171319}
Let $\Lambda$ be a generic and nonresonant formal normal form, and suppose $\AA \in \sfop{Syst} (\Lambda)$ is a system defined over a standard domain $D \times S$.
There exists an ordered pair of linearly independent $2$-dimensional vector functions $(e_1, e_2)$, where $e_i = e_i (x, \hbar)$ with the following properties:
\begin{enumerate}
\item $e_1$ is a nowhere-vanishing and regular on a standard subdomain $D_0 \times S_0$;
\item $e_2$ is nowhere-vanishing and regular on $D_0 \times S_0$, but possibly with a branch point singularity at $x = 0$ with monodromy that changes it by a multiple of $e_1$.
\item the limits\vspace{-15pt}
\eqn{
	e_{i, 0\ast} (\hbar) \coleq \lim_{x \to 0} e_i (x, \hbar)
	\qqtext{and}
	e_{i, \ast0} (x) \coleq \lim_{\hbar \to 0} e_i (x, \hbar)\vspace{-5pt}
}
exist uniformly for $\hbar \in S_0$ and $x \in D_0$ respectively.
\item The vector $e_{i, 0\ast} (\hbar)$ is regular on $S_0$, and it is an eigenvector of the residue matrix $\AA_0 (\hbar)$ with eigenvalue $\rho_i (\hbar)$.
\item The vector $e_{i, \ast0} (x)$ is holomorphic on $D_0$, and it is an eigenvector of the matrix $\lim_{\hbar \to 0} \AA (x, \hbar)$ with eigenvalue $\eta_i (x)$.
\item The vectors $e_i$ satisfy the differential equation\vspace{-5pt}
\eqn{
	\hbar x \del_x e_i + \AA e_i = \lambda_i e_i
\fullstop\vspace{-10pt}
}
\end{enumerate}
\end{prop}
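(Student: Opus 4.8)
The plan is to build $(e_1,e_2)$ by transporting, through the triangularising gauge transformation, an explicit pair of eigen-sections of a triangular model. First, apply the \hyperlink{190325143646}{Triangularisation Theorem (\Autoref*{190325143646})} to obtain a standard subdomain $D_0 \times S_0$ (with $S_0$ of the same opening as $S$) and a regular invertible gauge transformation $\GG = \GG(x,\epsilon)$ on $D_0 \times S_0$ carrying $\AA$ into an upper-triangular system $\Lambda + \UU$ with $\UU = \mtx{0 & u \\ 0 & 0}$, where $u \to 0$ both as $x \to 0$ and as $\epsilon \to 0$, and where $\psi = \GG\phi$ turns each solution $\phi$ of $\epsilon x \de_x \phi + (\Lambda + \UU)\phi = 0$ into a solution $\psi$ of $\AA$. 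For the triangular system, take the basis $\phi_1 = f_1 \vec{e}_1$, $\phi_2 = f_2(\vec{e}_2 + c_{12}\vec{e}_1)$ from \Autoref{190314182433}, where $c_{12}$ is the distinguished function of the \hyperlink{190328150555}{Vanishing Lemma (\Autoref*{190328150555})}: it is regular on $D_0 \times S_0$ apart from an at-most-logarithmic branch point at $x = 0$, and $|c_{12}| \lesssim |x|$ as $x \to 0$ while $|c_{12}| \lesssim |\epsilon|$ as $\epsilon \to 0$. Set $\tilde{e}_1 \coleq \vec{e}_1$, $\tilde{e}_2 \coleq \vec{e}_2 + c_{12}\vec{e}_1$ (so $\phi_i = f_i\tilde{e}_i$) and, finally, $e_i \coleq \GG\tilde{e}_i$; concretely, $e_1$ is the first column of $\GG$ and $e_2 = \GG\vec{e}_2 + c_{12}e_1$.

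Because $\epsilon x \de_x f_i = -\lambda_i f_i$, substituting $\phi_i = f_i\tilde{e}_i$ into the triangular equation and cancelling $f_i$ gives $\epsilon x \de_x \tilde{e}_i + (\Lambda + \UU)\tilde{e}_i = \lambda_i \tilde{e}_i$; running the same cancellation on $\psi_i \coleq \GG\phi_i = f_i e_i$, which solves $\AA$, yields property (6): $\epsilon x \de_x e_i + \AA e_i = \lambda_i e_i$. For properties (1) and (2): $\det(\tilde{e}_1 \mid \tilde{e}_2) = 1$ and $\GG$ is invertible, so $(e_1, e_2)$ is everywhere a basis and in particular each $e_i$ is nowhere-vanishing; $e_1$ is regular because $\GG$ is; and $e_2$ is regular on $D_0 \times S_0$ apart from the logarithmic branch point of $c_{12}$ at $x = 0$, whose monodromy shifts $c_{12}$ by a holomorphic term, hence shifts $e_2$ by a multiple of $e_1$. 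For property (3): $\GG$ is holomorphic in $x$ on $D_0$ (which contains the origin) and has a uniform limit as $\epsilon \to 0$, and $c_{12} \to 0$ in both limits, so $e_{i, 0\ast}(\epsilon) = \GG(0, \epsilon)\vec{e}_i$ is regular on $S_0$ and $e_{i, \ast 0}(x) = \big(\lim_{\epsilon \to 0}\GG(x, \epsilon)\big)\vec{e}_i$ is holomorphic on $D_0$.

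For properties (4) and (5), take the respective limits in $\epsilon x \de_x e_i + \AA e_i = \lambda_i e_i$. The one delicate point is that $\epsilon x \de_x e_i \to 0$ in both limits. Write $\epsilon x \de_x e_i = \epsilon x(\de_x \GG)\tilde{e}_i + \GG \cdot (\epsilon x \de_x \tilde{e}_i)$. The first summand vanishes in either limit since $\de_x \GG$ and $\tilde{e}_i$ are bounded near $x = 0$ and as $\epsilon \to 0$ (shrinking $D_0$ if necessary), while the prefactor $\epsilon x$ supplies the needed extra factor of $x$ or $\epsilon$. The second summand vanishes because $\epsilon x \de_x \tilde{e}_i = \lambda_i \tilde{e}_i - (\Lambda + \UU)\tilde{e}_i$, and in each limit $\tilde{e}_i \to \vec{e}_i$ while $\Lambda + \UU$ tends to a diagonal matrix whose $i$-th entry is the limiting value of $\lambda_i$, so the expression collapses to zero. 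Since $\lambda_i(0, \epsilon) = m_i + \epsilon\bar{\rho}_i(\epsilon) = \rho_i(\epsilon)$ and $\lambda_i(x, 0) = m_i + x\bar{\eta}_i(x) = \eta_i(x)$, passing to the limits gives $\AA(0, \epsilon)\, e_{i, 0\ast} = \rho_i\, e_{i, 0\ast}$ and $\big(\lim_{\epsilon \to 0}\AA(x, \epsilon)\big)\, e_{i, \ast 0} = \eta_i\, e_{i, \ast 0}$. These limit vectors are nonzero, hence actual eigenvectors and, after shrinking $D_0 \times S_0$ so that $\rho_1 \neq \rho_2$ and $\eta_1 \neq \eta_2$, eigenbases, because $\GG$ remains invertible at $x = 0$ and in the limit $\epsilon \to 0$.

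The main obstacle I anticipate is precisely that last claim: verifying that the gauge transformation produced by the \hyperlink{190325143646}{Triangularisation Theorem} does not degenerate in the two limits. I would settle it by inspecting its construction factor by factor --- the diagonalisers $\HH_1(\epsilon)$, $\HH_2(x)$ of \Autoref{190321134347}, the unipotent $\GG_1$, and the diagonal $\GG_2$ whose entries solve $\de_x g_{ii} = v_{ii}g_{ii}$ and are therefore nowhere zero --- each factor being invertible at $x = 0$ and as $\epsilon \to 0$, hence so is the product $\GG$. Everything else reduces to the limit bookkeeping above together with the estimates already recorded in the \hyperlink{190328150555}{Vanishing Lemma}.
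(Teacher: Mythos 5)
Your proof is correct and takes exactly the route the paper intends: the paper gives no explicit proof of this proposition, merely asserting that it follows from the preceding discussion (in particular Proposition~\ref{190908114229}, whose proof defines $\psi_i \coleq \GG \phi_i$ for the same $\GG$ and $\phi_i$ you use) together with the Vanishing Lemma. You have simply supplied the limit bookkeeping and the factor-by-factor check that the triangularising gauge $\GG$ stays invertible at $x=0$ and in the limit $\epsilon \to 0$, both of which the paper leaves implicit.
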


We conclude by restating this corollary in terms of filtrations on $\Complex^2$.

\begin{thm}{190909102739}
Let $\Lambda$ be a generic and nonresonant formal normal form, and suppose $\AA \in \sfop{Syst} (\Lambda)$ is a system defined over a standard domain $D \times S$.
Then there is a natural $1$-dimensional subspace $L = L(x, \hbar) \subset \Complex^2$, which depends on $(x, \hbar)$, defined over a standard subdomain $D_0 \times S_0 \subset D \times S$, with the following properties:
\begin{enumerate}
\item There is a generator $e_1 \in L$, which is a regular vector function defined on $D_0 \times S_0$, and which satisfies the differential equation
\eqn{
	\hbar x \del_x e_1 + \AA e_1 = \lambda_1 e_1
\fullstop
}
\item The vector space $\lim\limits_{x \to 0} L (x, \hbar)$ is the $\rho_1 (\hbar)$-eigenspace of the residue $\AA (0, \hbar)$.
\item The vector space $\lim\limits_{\hbar \to 0} L (x, \hbar)$ is the $\eta_1 (x)$-eigenspace of the leading order $\AA_0 (x)$.
\end{enumerate}
Furthermore, consider the quotient vector space $L' = L' (x, \hbar) \coleq \Complex^2 \big/ L$, which also depends on $(x, \hbar)$ and is defined over the standard domain $D_0 \times S_0$.
Then:
\begin{enumerate}
\item There is a generator $\bar{e}_2 \in L'$, which is a regular vector function defined on $D_0 \times S_0$, and which satisfies the differential equation
\eqn{
	\hbar x \del_x \bar{e}_2 + \AA \bar{e}_2 = \lambda_2 \bar{e}_2
\fullstop
}
\item The vector space $\lim\limits_{x \to 0} L' (x, \hbar)$ is canonically isomorphic to the $\rho_2 (\hbar)$-eigenspace of the residue $\AA (0, \hbar)$.
\item The vector space $\lim\limits_{\hbar \to 0} L' (x, \hbar)$ is canonically isomorphic to the $\eta_2 (x)$-eigenspace of the leading order $\AA_0 (x)$.
\end{enumerate}
\end{thm}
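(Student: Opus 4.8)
The plan is to read off the theorem from \Autoref{190908171319}, which already produces the essential objects; what remains is to repackage the vectors $e_1, e_2$ there as a line $L$ and its quotient $L'$, and to reinterpret the limiting eigenvector statements as statements about eigenspaces and coeigenspaces.

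First I would apply \Autoref{190908171319} to obtain, over a standard subdomain $D_0 \times S_0$, the ordered pair $(e_1, e_2)$ of linearly independent vector functions together with properties (1)--(6) stated there. I set $L = L (x, \epsilon) \coleq \Complex \cdot e_1 (x, \epsilon) \subset \Complex^2$, which is a well-defined line because $e_1$ is nowhere vanishing; property (1) of the theorem is then exactly property (6) of the proposition with $i = 1$. For property (2), observe that $\lim_{x \to 0} L (x, \epsilon) = \Complex \cdot e_{1, 0\ast} (\epsilon)$, and by property (4) of the proposition this vector spans the $\rho_1 (\epsilon)$-eigenspace of $\AA (0, \epsilon)$; since the classical polar data is generic, after shrinking $S_0$ we have $\rho_1 (\epsilon) \neq \rho_2 (\epsilon)$ for all $\epsilon \in S_0$, so that this eigenspace is one-dimensional and therefore equals $\lim_{x \to 0} L$. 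Property (3) is the same argument with $\AA (0, \epsilon)$ replaced by $\AA_0 (x)$ and $\rho_i$ by $\eta_i$, using property (5) and the fact that $\eta_1 (0) = m_1 \neq m_2 = \eta_2 (0)$ after a further shrinking of $D_0$.

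For the quotient, the crucial point is that property (6) with $i = 1$ says exactly that $L$ is invariant under the connection $\nabla \coleq \epsilon x \de_x + \AA$: for any scalar function $f$ we have $\nabla (f e_1) = \big( \epsilon x \de_x f + \lambda_1 f \big) e_1 \in L$. Hence $\nabla$ descends to a connection $\bar{\nabla}$ on the quotient line $L' = \Complex^2 / L$ over $D_0 \times S_0$, and I define $\bar{e}_2$ to be the image of $e_2$ under the projection $\Complex^2 \to L'$. This $\bar{e}_2$ is nowhere vanishing because $e_1, e_2$ are linearly independent, and — importantly — it is single-valued and regular on all of $D_0 \times S_0$, since the branch ambiguity of $e_2$ at $x = 0$ changes it only by a multiple of $e_1$, which is zero in the quotient. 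Projecting property (6) with $i = 2$ gives $\bar{\nabla} \bar{e}_2 = \lambda_2 \bar{e}_2$, i.e.\ $\epsilon x \de_x \bar{e}_2 + \AA \bar{e}_2 = \lambda_2 \bar{e}_2$ with $\AA$ acting through its induced action on $L'$; this is property (1) for $L'$. For properties (2) and (3) of $L'$, I would invoke \Autoref{190315132922} together with genericity to know that $\AA (0, \epsilon)$ is diagonalisable with eigenspace decomposition $E_{\rho_1} (\epsilon) \oplus E_{\rho_2} (\epsilon)$; then the composite $E_{\rho_2} (\epsilon) \to \Complex^2 \to \Complex^2 / E_{\rho_1} (\epsilon)$ is a canonical isomorphism, and since $\lim_{x \to 0} L' = \Complex^2 / \lim_{x \to 0} L = \Complex^2 / E_{\rho_1} (\epsilon)$ with $\bar{e}_{2, 0\ast}$ corresponding to $e_{2, 0\ast} \in E_{\rho_2} (\epsilon)$ under this isomorphism, the claim follows. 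Property (3) of $L'$ is identical with $\AA_0 (x)$ in place of $\AA (0, \epsilon)$.

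The proof is essentially bookkeeping once \Autoref{190908171319} is available; the only genuine points requiring care are the verification that $L$ is $\nabla$-stable (so that the quotient connection exists and $\bar{e}_2$ satisfies the expected equation) and the identification of the ``canonical isomorphism'' in the quotient statements with the one furnished by the complementary eigenspace — these being precisely where the branch ambiguity of $e_2$ is eliminated and where genericity is used to force the eigenspaces to be lines. I do not anticipate any substantive obstacle beyond pinning down these identifications.
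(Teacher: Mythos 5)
Your proposal is correct and takes essentially the same approach as the paper: the paper presents this theorem explicitly as a restatement of the preceding proposition (on the eigenvector functions $e_1, e_2$) without a separate proof, and your bookkeeping — in particular the observation that $L = \Complex \cdot e_1$ is stable under $\nabla = \epsilon x \de_x + \AA$ so that the connection descends, and that the monodromy of $e_2$ (which changes it only by a multiple of $e_1$) is killed in the quotient so $\bar{e}_2$ is genuinely regular on $D_0 \times S_0$ — is precisely what is needed to make that restatement rigorous.
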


\enlargethispage{5pt}
\begin{adjustwidth}{-2cm}{-1.5cm}
{\footnotesize
\bibliographystyle{nikolaev}
\bibliography{/Users/Nikita/Documents/Library/References}
}
\end{adjustwidth}
\end{document}